\documentclass[11pt]{article}
\usepackage[margin=1in]{geometry}
\usepackage{amsmath,amssymb,amsthm,mathtools,graphicx,booktabs,cite,multirow,array}
\usepackage{microtype}
\usepackage{xspace}
\usepackage[hidelinks]{hyperref}
\usepackage{enumitem}
\usepackage{algorithm}
\usepackage{algpseudocode}
\usepackage{placeins}
\usepackage{indentfirst}
\newtheorem{theorem}{Theorem}
\newtheorem{corollary}{Corollary}
\newtheorem{lemma}{Lemma}
\newcommand{\FP}{\operatorname{FP}}
\newcommand{\cE}{\mathcal E}
\newcommand{\cB}{\mathcal B}
\newcommand{\one}{\boldsymbol 1}
\newcommand{\alg}{\textsc{SR-CG}\xspace}

\title{Spectral-Residual Continuous Greedy for Tensor Sampling}
\author{Hao Li, Jie Xu, Zheng Xie\thanks{Corresponding author: xiezheng81@nudt.edu.cn.}\\
College of Science, National University of Defense Technology, Changsha 410073, China}

\date{September 2026}

\begin{document}
\maketitle

\begin{abstract}
Sampling a multidomain tensor from limited measurements is fundamental
in structured linear inverse problems. Kronecker-structured sampling
avoids the full high-dimensional sensing matrix, but design remains
difficult: sequential discrete methods can commit the cross-mode budget
too early, whereas standard continuous greedy avoids such early
commitment at the cost of repeated state-dependent gradient evaluations.
We propose spectral-residual continuous greedy (\alg) for frame-potential
(FP) tensor sampling. \alg maintains a state-dependent fractional
allocation before rounding. Mode-wise Gram matrices provide safe
gradient intervals for direction certification, while Shapley values
prioritize unresolved gradient queries.
SR-CG then applies deterministic rounding followed by path-guided
exchange (PGX), which reuses the final fractional state to restrict
candidate swaps and accepts only exact FP-decreasing exchanges.
We establish a finite-step approximation guarantee
that approaches the classical $1-1/e$ factor as direction certification
becomes exact and finite-step residual error vanishes. Experiments show
fewer exact gradient evaluations and better FP designs, with clearer
gains on instances where the cross-mode budget allocation is difficult
to determine. \alg also achieves lower average normalized mean-squared error (NMSE)
than Greedy-FP at all tested noise levels, although the
reconstruction gain is smaller than the FP gain.
\end{abstract}

\noindent\textbf{Keywords:} Tensor sampling, frame potential, submodular optimization, continuous greedy, Shapley value.

\section{Introduction}
Choosing a small set of informative measurements is central to sensor
placement, experimental design, graph sampling, and structured inverse
problems \cite{joshi2009sensor,shamaiah2010greedy,krause2008sensor}.
For tensor data, selections made in different modes jointly determine
the observed entries and the geometry of the sensing system. A
Kronecker-structured sampler exploits a known multilinear model to
choose indices mode by mode without constructing the full
high-dimensional sensing matrix \cite{ortiz2019sparse}. The total modal
budget is fixed, but the individual modal cardinalities are not. The
design must jointly determine both the cross-mode allocation and the
retained rows within each mode. Frame potential (FP) is attractive
in this setting because it avoids repeated matrix inversion and
factorizes over Kronecker-structured sensing matrices
\cite{ranieri2014near,benedetto2003finite,ortiz2019sparse}.

Greedy-FP \cite{ortiz2019sparse} removes one feasible row at a time
using the current FP marginal over the shared cross-mode feasible set.
Its final modal cardinalities arise from sequential greedy removals
rather than being fixed beforehand, although this discrete scoring can
commit the budget to one mode early. Fast Frank--Wolfe (FFW) \cite{li2026fast} also leaves
the modal cardinalities free. It evaluates normalized mode-wise
multilinear-FP derivatives at the midpoint, compares the scores across
modes, and selects under the common total budget and per-mode minimum
counts. This cross-mode ranking is computed once and is not updated as
the retained design changes. A fast sampling method based on mean-squared error (MSE) provides a related
alternative for linear inverse models \cite{wang2022fast}. Continuous
greedy offers a global fractional search with the classical $1-1/e$
guarantee under a matroid constraint
\cite{calinescu2011maximizing}, but a direct tensor implementation
repeatedly evaluates state-dependent gradient coordinates over many
short steps.

We develop spectral-residual continuous greedy (\alg) to separate
state-dependent cross-mode allocation from expensive exact-gradient
evaluation. Unlike Greedy-FP's sequential discrete commitment and FFW's
fixed midpoint ranking, \alg keeps the cross-mode allocation fractional
and state dependent until rounding. Its spectral intervals are used to
certify the current linear-oracle direction without evaluating every
coordinate exactly, while Shapley values only order unresolved gradient
queries and do not act as sampling scores. Hence the accepted direction
remains determined by the current state-dependent gradient rather than
by a fixed global ranking. The resulting finite-step analysis provides
an approximation guarantee that approaches the classical $1-1/e$ factor as
direction certification becomes exact and finite-step residual error vanishes.

Our main contributions are:
\begin{enumerate}[leftmargin=1.8em]
\item We derive a mode-wise spectral representation of the tensor-FP gradient and safe coordinate intervals that use only $K_r\times K_r$ matrices.
\item We turn these intervals into an online certificate for the quality of the current matroid direction, so exact gradient coordinates are evaluated only when needed.
\item We derive an exact directional FP polynomial and use it to choose variable steps with an explicit residual condition, leading to a finite-step approximation guarantee that approaches $1-1/e$ as direction certification becomes exact and finite-step residual error vanishes.
\item
We complete SR-CG with guarantee-preserving deterministic rounding and
a problem-specific path-guided exchange (PGX) stage. PGX reuses the
final fractional state to restrict exact FP-decreasing exchanges,
coupling the fractional and discrete stages without weakening the
approximation guarantee.
\end{enumerate}

\subsection{Related work}
Convex and submodular sensor selection is classical \cite{joshi2009sensor,shamaiah2010greedy,krause2008sensor}. FP-based selection avoids repeated inversion and has been used for sensor placement and sampling \cite{ranieri2014near,benedetto2003finite}. For Kronecker-structured tensor sampling, Ortiz-Jimenez et al. \cite{ortiz2019sparse} showed that tensor FP factorizes across modes and proposed Greedy-FP, which repeatedly updates the feasible FP marginal over all modes and lets the modal cardinalities emerge from sequential discrete removals. Wang et al. \cite{wang2022fast} developed a fast sampling method based on MSE for linear models. Li et al. \cite{li2026fast} proposed FFW: for tensor sampling it computes a midpoint multilinear-FP derivative in each mode, applies the original FFW mode-wise normalization, and selects the required rows from the resulting cross-mode scores subject to the per-mode minimum counts. Thus FFW also determines the cross-mode allocation. Its distinction is that the normalized ranking is computed once rather than updated with the retained design. We use these methods as baselines and keep their roles separate from the proposed adaptive continuous search.

Exchange-based refinement is well established in experimental design.
Lau and Zhou analyze local-search and randomized exchange frameworks,
including the classical Fedorov exchange method
\cite{lau2022local}. In sensor selection, Uci{\'n}ski combines a relaxed
design with randomization and a restricted exchange algorithm
\cite{ucinski2020doptimal}.
These works establish the general exchange and relaxation-guided
refinement paradigm. Within SR-CG, PGX provides a problem-specific
coupling between the fractional and discrete stages: the final
fractional state restricts the removal frontier, $\phi_e$ provides only
a deterministic secondary ordering, and feasible swaps are ultimately
evaluated by the exact Kronecker-factorized FP.

For monotone submodular maximization under matroid constraints, continuous greedy gives the classical $1-1/e$ factor \cite{calinescu2011maximizing}, and pipage or swap rounding converts a fractional solution to a feasible discrete set \cite{chekuri2010dependent}. Recent work develops faster algorithms that reduce query, communication, or gradient cost in more general settings \cite{buchbinder2024deterministic,rostami2026atcg}. Complementary lower-bound results show that near-linear query cost is unavoidable for broad submodular maximization problems \cite{peng2025lower}. Our method does not claim a general sublinear oracle bound. Instead, it uses the special algebra of tensor FP to certify many coordinate comparisons from small mode-wise matrices. Shapley values have also been used for sensor importance \cite{ravindra2025sensor}. Here Owen's integral representation gives a fixed analytic priority used for query ordering and as a secondary PGX ordering. It does not determine the final sampling rule.

\section{Problem formulation}
We define the tensor model, structured sampler, FP objective, and equivalent
submodular complement problem. Lowercase bold italic letters denote vectors,
uppercase bold upright letters denote matrices, calligraphic letters denote
sets, and plain italic letters denote scalars or scalar-valued functions.

\subsection{Tensor model and structured sampling}
Let an order-$R$ signal tensor $\mathcal F\in\mathbb R^{N_1\times\cdots\times N_R}$ admit the known multilinear model
\begin{equation*}
 \mathcal F=\mathcal G\times_1\mathbf U_1\times_2\cdots\times_R\mathbf U_R,
\end{equation*}
where the factor matrices $\mathbf U_r\in\mathbb R^{N_r\times K_r}$ are known and have full column rank, while the compact core $\mathcal G\in\mathbb R^{K_1\times\cdots\times K_R}$ is the unknown low-dimensional representation to be reconstructed. We write the $i$th row of $\mathbf U_r$ as $(\boldsymbol u_i^{(r)})^\top$. Thus $N_r$ is the number of candidate indices in mode $r$, and $K_r$ is the dimension of the corresponding latent factor. Vectorization gives
\begin{equation}
 \boldsymbol f=(\mathbf U_R\otimes\cdots\otimes\mathbf U_1)\boldsymbol g,
 \qquad \boldsymbol f=\operatorname{vec}(\mathcal F),\quad
 \boldsymbol g=\operatorname{vec}(\mathcal G).
 \label{eq:vector_model_full}
\end{equation}
This is the linear inverse model associated with the tensor.

For mode $r$, let $\mathcal N_r=[N_r]$ be the candidate row set and let $\mathcal L_r\subseteq\mathcal N_r$ be the retained rows. The row submatrix indexed by $\mathcal L_r$ is $\mathbf U_r(\mathcal L_r)$. The Cartesian product $\mathcal L_1\times\cdots\times\mathcal L_R$ identifies the tensor entries observed by a Kronecker-structured sampler, whose sensing matrix is
\begin{equation*}
 \boldsymbol{\Psi}(\mathcal L)=\mathbf U_R(\mathcal L_R)\otimes\cdots\otimes\mathbf U_1(\mathcal L_1),
 \qquad \mathcal L=(\mathcal L_1,\ldots,\mathcal L_R).
\end{equation*}
Hence $|\mathcal L_r|$ is the number of retained mode-$r$ indices, whereas $\prod_r|\mathcal L_r|$ is the number of tensor entries selected by the Cartesian-product sampler. With additive measurement noise, the selected linear system is used to estimate $\boldsymbol g$ by least squares when the sampled factors have full column rank, and \eqref{eq:vector_model_full} then reconstructs $\boldsymbol f$.

\subsection{Frame-potential sampling problem}
For any $\mathcal S\subseteq\mathcal N_r$, define the mode-$r$ frame potential by
\begin{equation*}
\begin{aligned}
 F_r(\mathcal S)
 &=\FP(\mathbf U_r(\mathcal S))
 =\operatorname{tr}\!\left[(\mathbf U_r(\mathcal S)^\top\mathbf U_r(\mathcal S))^2\right]\\
 &=\sum_{i,j\in\mathcal S}\left((\boldsymbol u_i^{(r)})^\top\boldsymbol u_j^{(r)}\right)^2.
\end{aligned}
\end{equation*}
The Kronecker identity makes the tensor FP factorize across modes \cite{ortiz2019sparse}:
\begin{equation*}
 F(\mathcal L)=\FP(\boldsymbol{\Psi}(\mathcal L))=\prod_{r=1}^R F_r(\mathcal L_r).
\end{equation*}
Let $N_\Sigma=\sum_rN_r$. We use the additive total modal budget employed in prior structured tensor-sampling work:
\begin{equation}
 \min_{\mathcal L_1,\ldots,\mathcal L_R}F(\mathcal L)
 \quad\text{s.t.}\quad
 \sum_{r=1}^R|\mathcal L_r|=L,
 \qquad |\mathcal L_r|\ge\kappa_r\ge K_r.
 \label{eq:budget}
\end{equation}
Here $L$ is the total number of retained \emph{modal indices}, and feasibility requires $\sum_r\kappa_r\le L\le N_\Sigma$. The total budget $L$ is fixed, whereas the individual modal cardinalities $|\mathcal L_r|$ are decision variables. Thus $\kappa_r$ is a lower bound, not a prescribed mode-wise budget. The optimization jointly chooses the allocation of $L$ across modes and the retained rows within each mode. Setting $\kappa_r>K_r$ adds sampling slack, while the standard rank-based constraint used in Greedy-FP and FFW is obtained with $\kappa_r=K_r$. Because $\mathbf U_r$ has $K_r$ columns, $\kappa_r\ge K_r$ is necessary for full-column-rank recovery but is not sufficient by itself. Therefore, reconstruction experiments check rank explicitly.

\subsection{Equivalent complement problem and submodular structure}
For the submodular form used later, let $\mathcal N=(\mathcal N_1,\ldots,\mathcal N_R)$ denote the full retained design and define the disjoint tagged ground set $\mathcal E=\bigcup_{r=1}^R(\{r\}\times\mathcal N_r)$. Set $B=N_\Sigma-L$. A tagged removal set $\mathcal C\subseteq\mathcal E$ has mode component $\mathcal C_r=\{i:(r,i)\in\mathcal C\}$ and retained design $\mathcal L(\mathcal C)=(\mathcal N_1\setminus\mathcal C_1,\ldots,\mathcal N_R\setminus\mathcal C_R)$. The feasible removal bases are
\begin{equation*}
 \cB_B=\left\{\mathcal C\subseteq\mathcal E:\ |\mathcal C|=B,\ |\mathcal C_r|\le N_r-\kappa_r\ \forall r\right\}.
\end{equation*}
Equivalently, the total removal budget $B$ is shared across modes. The upper bounds on $|\mathcal C_r|$ enforce only the retained lower quotas and do not prescribe a removal count for each mode.
With $Z_r=F_r(\mathcal N_r)$, define the normalized FP reduction for every $\mathcal C\subseteq\mathcal E$ by
\begin{equation}
 v(\mathcal C)=1-\frac{F(\mathcal L(\mathcal C))}{F(\mathcal N)}
 =1-\frac{\prod_{r=1}^RF_r(\mathcal N_r\setminus\mathcal C_r)}{\prod_{r=1}^RZ_r}.
 \label{eq:v}
\end{equation}
Minimizing \eqref{eq:budget} is equivalent to maximizing $v(\mathcal C)$ over $\cB_B$. This complement objective is normalized, monotone, and submodular under the truncated partition-matroid constraint \cite{ortiz2019sparse}. We use $v$ as the characteristic function of the cooperative game on the tagged rows.

The mode-wise retained FP is also monotone supermodular. For $i\notin\mathcal S$,
\begin{equation*}
 F_r(\mathcal S\cup\{i\})-F_r(\mathcal S)
 =\|\boldsymbol u_i^{(r)}\|_2^4
 +2\sum_{j\in\mathcal S}\left((\boldsymbol u_i^{(r)})^\top\boldsymbol u_j^{(r)}\right)^2,
\end{equation*}
which is nondecreasing as $\mathcal S$ grows. This identity records the known structural property used later.

\section{Spectral-residual continuous greedy}
The method combines an analytic Shapley reference, exact low-dimensional gradient formulas, safe spectral intervals, a certified linear oracle, residual-controlled adaptive steps, and a discrete completion stage. Shapley values are used only to prioritize unresolved computations. The accepted continuous direction is always determined by the current state-dependent gradient.

\subsection{Multilinear extension and exact modal gradient}
For each mode, define the row statistics
\begin{equation*}
 a_i^{(r)}=\|\boldsymbol u_i^{(r)}\|_2^4,\qquad
 b_i^{(r)}=\sum_{j\ne i}\left((\boldsymbol u_i^{(r)})^\top\boldsymbol u_j^{(r)}\right)^2.
\end{equation*}
These quantities depend only on the modal factor matrices and can be precomputed without an $N_r\times N_r$ pairwise table. Let $\mathbf G_r^{\rm full}=\mathbf U_r^\top\mathbf U_r$. Then
\[
 a_i^{(r)}+b_i^{(r)}
 =(\boldsymbol u_i^{(r)})^\top\mathbf G_r^{\rm full}\boldsymbol u_i^{(r)},
\]
so all $a_i^{(r)}$ and $b_i^{(r)}$ can be computed in $O(N_rK_r^2)$ time. Let $\boldsymbol x=(\boldsymbol x^{(1)},\ldots,\boldsymbol x^{(R)})\in[0,1]^{N_\Sigma}$ be a fractional removal vector and set $\boldsymbol y=\one-\boldsymbol x$. Thus $y_i^{(r)}$ is the $i$th entry of the mode-$r$ vector $\boldsymbol y^{(r)}$. Define the weighted modal Gram
\begin{equation*}
 \mathbf G_r(\boldsymbol x)=\mathbf U_r^{\top}\operatorname{diag}(\boldsymbol y^{(r)})\mathbf U_r.
\end{equation*}
The naive quantity $\|\mathbf G_r(\boldsymbol x)\|_F^2$ treats diagonal survival as $(y_i^{(r)})^2$, while a multilinear extension requires survival probability $y_i^{(r)}$. The exact correction is therefore
\begin{equation*}
 H_r(\boldsymbol x)=\|\mathbf G_r(\boldsymbol x)\|_F^2+
 \sum_i a_i^{(r)}(y_i^{(r)}-(y_i^{(r)})^2).
\end{equation*}
Equivalently,
\begin{equation}
 H_r(\boldsymbol x)=\sum_i a_i^{(r)}y_i^{(r)}+\sum_{i\ne j}\left((\boldsymbol u_i^{(r)})^\top\boldsymbol u_j^{(r)}\right)^2y_i^{(r)}y_j^{(r)}. \label{eq:Hexpanded}
\end{equation}
Hence
\begin{equation}
 \widetilde v(\boldsymbol x)=1-\prod_{r=1}^R\frac{H_r(\boldsymbol x)}{Z_r}. \label{eq:multiv}
\end{equation}

\begin{lemma}[Exact factorized gradient]\label{lem:grad}
For coordinate $(r,i)$,
\begin{equation}
 g_i^{(r)}(\boldsymbol x):=\partial_i^{(r)}\widetilde v(\boldsymbol x)
 =\gamma_r(\boldsymbol x)\left[a_i^{(r)}+2\sum_{j\ne i}\left((\boldsymbol u_i^{(r)})^\top\boldsymbol u_j^{(r)}\right)^2y_j^{(r)}\right], \label{eq:grad}
\end{equation}
where
\begin{equation*}
 \gamma_r(\boldsymbol x)=\frac1{Z_r}\prod_{s\ne r}\frac{H_s(\boldsymbol x)}{Z_s}.
\end{equation*}
\end{lemma}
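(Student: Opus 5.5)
The plan is to differentiate the product form \eqref{eq:multiv} with the chain rule, using that the coordinate $x_i^{(r)}$ enters only the mode-$r$ factor $H_r$. Because $\boldsymbol y=\one-\boldsymbol x$, we have $\partial y_i^{(r)}/\partial x_i^{(r)}=-1$. For $s\ne r$, the factor $H_s(\boldsymbol x)$ depends only on $\boldsymbol x^{(s)}$, so it is constant in $x_i^{(r)}$. Hence
\begin{equation*}
 \partial_i^{(r)}\widetilde v(\boldsymbol x)
 =-\frac{1}{Z_r}\Bigl(\prod_{s\ne r}\frac{H_s(\boldsymbol x)}{Z_s}\Bigr)\frac{\partial H_r}{\partial x_i^{(r)}}
 =\gamma_r(\boldsymbol x)\,\frac{\partial H_r}{\partial y_i^{(r)}}.
\end{equation*}
The two minus signs cancel, which accounts for the positive sign in \eqref{eq:grad}.

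It remains to compute $\partial H_r/\partial y_i^{(r)}$ from the expanded form \eqref{eq:Hexpanded}. The linear term $\sum_k a_k^{(r)}y_k^{(r)}$ contributes $a_i^{(r)}$. In the double sum over ordered pairs $k\ne j$, the variable $y_i^{(r)}$ appears in exactly two kinds of pairs: those with $k=i$ and those with $j=i$. By symmetry of $((\boldsymbol u_k^{(r)})^\top\boldsymbol u_j^{(r)})^2$, these give the same sum. Each term is linear in $y_i^{(r)}$ because $i\ne j$ excludes any $(y_i^{(r)})^2$ term. Together they contribute $2\sum_{j\ne i}((\boldsymbol u_i^{(r)})^\top\boldsymbol u_j^{(r)})^2y_j^{(r)}$. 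Substituting gives \eqref{eq:grad}.

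The only step needing care is to justify that \eqref{eq:Hexpanded} and \eqref{eq:multiv} really represent the multilinear extension, so that differentiating them is legitimate. First I would check that $\|\mathbf G_r(\boldsymbol x)\|_F^2=\sum_{k,j}((\boldsymbol u_k^{(r)})^\top\boldsymbol u_j^{(r)})^2y_k^{(r)}y_j^{(r)}$. Adding the correction $\sum_k a_k^{(r)}(y_k^{(r)}-(y_k^{(r)})^2)$ replaces the diagonal terms $a_k^{(r)}(y_k^{(r)})^2$ by $a_k^{(r)}y_k^{(r)}$, which gives \eqref{eq:Hexpanded}. That expression is $\mathbb E[F_r(\mathcal N_r\setminus\mathcal C_r)]$ under independent survival with probabilities $y_k^{(r)}$. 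Independence across modes then factorizes the expectation of the product, which confirms \eqref{eq:multiv}.

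Once this is settled, the derivative computation is routine. As a sanity check, at integral $\boldsymbol x$ the formula reproduces the mode-$r$ marginal identity of the supermodularity section, scaled by the other modes' factors.
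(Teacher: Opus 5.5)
Your proposal is correct and matches the paper's proof: differentiate \eqref{eq:Hexpanded} using $\partial y_i^{(r)}/\partial x_i^{(r)}=-1$, which produces the bracketed term, then apply the product rule to \eqref{eq:multiv}, where only the mode-$r$ factor depends on $x_i^{(r)}$. Your extra check that \eqref{eq:Hexpanded} and \eqref{eq:multiv} really give the multilinear extension is correct; the paper asserts this in the text before the lemma rather than proving it inside the proof.
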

\begin{proof}
Differentiate \eqref{eq:Hexpanded} with respect to $x_i^{(r)}$. Since $\partial y_i^{(r)}/\partial x_i^{(r)}=-1$,
\begin{equation*}
 -\partial_i^{(r)}H_r=a_i^{(r)}+2\sum_{j\ne i}\left((\boldsymbol u_i^{(r)})^\top\boldsymbol u_j^{(r)}\right)^2y_j^{(r)}. 
\end{equation*}
Applying the product rule to \eqref{eq:multiv} yields \eqref{eq:grad}.
\end{proof}

For a removal set $\mathcal C$, write $g(\mathcal C)=\sum_{(r,i)\in\mathcal C}g_i^{(r)}(\boldsymbol x)$ at the current state. No pairwise row table is needed to evaluate \eqref{eq:grad}. Indeed,
\begin{equation*}
 \sum_j\left((\boldsymbol u_i^{(r)})^\top\boldsymbol u_j^{(r)}\right)^2y_j^{(r)}=(\boldsymbol u_i^{(r)})^{\top}\mathbf G_r(\boldsymbol x)\boldsymbol u_i^{(r)},
\end{equation*}
so an exact queried coordinate is a $K_r\times K_r$ quadratic form.

\subsection{Shapley reference for the tensor FP-reduction game}
For the Owen-path calculation, set
\begin{equation*}
 A_r=\sum_i a_i^{(r)},\qquad D_r=\sum_i b_i^{(r)}.
\end{equation*}
Since $Z_r=F_r(\mathcal N_r)$, the definitions above give $Z_r=A_r+D_r$.

\subsubsection{Owen path representation}
Let $\widetilde v$ be the multilinear extension of $v$. Owen's identity gives
\begin{equation}
 \phi_e=\int_0^1 \partial_e\widetilde v(t\one)\,dt \label{eq:owen}
\end{equation}
for the Shapley value of element $e$. Along $\boldsymbol x=t\one$, write $u=1-t$. A diagonal row term survives with probability $u$, while an off-diagonal ordered pair survives with probability $u^2$. Therefore the expected retained FP of mode $s$ is
\begin{equation*}
 H_s(t\one)=A_su+D_su^2.
\end{equation*}
The marginal of removing row $(r,i)$ along the same path is
\begin{equation}
 \partial_i^{(r)}\widetilde v(t\one)=
 \frac{a_i^{(r)}+2b_i^{(r)}u}{Z_r}
 \prod_{s\ne r}\frac{A_su+D_su^2}{Z_s}. \label{eq:pathderiv}
\end{equation}
Define
\begin{equation}
 I_{rj}=\int_0^1u^j\prod_{s\ne r}\frac{A_su+D_su^2}{Z_s}\,du,
 \qquad j\in\{0,1\}. \label{eq:Irj}
\end{equation}

\begin{lemma}[Closed-form tensor Shapley value]\label{lem:phi}
The Shapley value of row $(r,i)$ for the normalized FP-reduction game \eqref{eq:v} is
\begin{equation}
 \phi_i^{(r)}=\frac{a_i^{(r)}I_{r0}+2b_i^{(r)}I_{r1}}{Z_r}. \label{eq:phi}
\end{equation}
\end{lemma}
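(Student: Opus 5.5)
The plan is to start from Owen's identity \eqref{eq:owen} with $e=(r,i)$ and evaluate the integrand along the diagonal $\boldsymbol x=t\one$ using Lemma~\ref{lem:grad}. This gives the closed form \eqref{eq:pathderiv} once $H_s(t\one)$ and the bracket in \eqref{eq:grad} are simplified on the diagonal. The formula then follows by a change of variables and linearity of the integral.

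\textbf{Step 1: evaluate the ingredients of \eqref{eq:grad} on the diagonal.} Set $\boldsymbol y=u\one$ with $u=1-t$. From \eqref{eq:Hexpanded},
\[
 H_s(t\one)=u\sum_i a_i^{(s)}+u^2\sum_{i\ne j}\bigl((\boldsymbol u_i^{(s)})^\top\boldsymbol u_j^{(s)}\bigr)^2=A_su+D_su^2,
\]
because summing $b_i^{(s)}$ over $i$ is exactly the ordered off-diagonal sum. The bracket in \eqref{eq:grad} becomes $a_i^{(r)}+2u\,b_i^{(r)}$, by the definition of $b_i^{(r)}$. Multiplying by $\gamma_r(t\one)=Z_r^{-1}\prod_{s\ne r}(A_su+D_su^2)/Z_s$ yields \eqref{eq:pathderiv}.

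\textbf{Step 2: integrate and split.} Substitute $u=1-t$, so $dt=-du$ and the limits swap; the orientation reverses and the sign cancels, giving $\int_0^1(\cdot)\,dt=\int_0^1(\cdot)\,du$. Split the numerator $a_i^{(r)}+2b_i^{(r)}u$ into its two terms. The constant factors $a_i^{(r)}/Z_r$ and $2b_i^{(r)}/Z_r$ come out of the integrals. What remains are precisely $I_{r0}$ and $I_{r1}$ from \eqref{eq:Irj}, which gives \eqref{eq:phi}.

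\textbf{Main obstacle: justifying Owen's identity for this game.} The calculation itself is routine. What needs care is that \eqref{eq:owen} applies, i.e.\ that \eqref{eq:multiv} really is the multilinear extension of the set function $v$ in \eqref{eq:v} on the tagged ground set. Concretely, $\mathbb E\bigl[\prod_r F_r(\mathcal N_r\setminus\mathcal C_r)\bigr]$ with independent removals must equal $\prod_r H_r(\boldsymbol x)$.

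I would verify this in two parts:
\begin{itemize}
\item \emph{Independence across modes.} The tagged ground sets of different modes are disjoint and removals are independent, so the expectation factorizes over modes.
\item \emph{Expectation within a mode.} Expand $F_r$ as a sum over ordered pairs $(i,j)$. A diagonal pair $i=j$ survives with probability $y_i$. An off-diagonal pair survives with probability $y_iy_j$, since $i\ne j$ are removed independently. This reproduces \eqref{eq:Hexpanded}.
\end{itemize}
Hence $\widetilde v$ is multilinear and agrees with $v$ at the vertices of the cube. Owen's integral of the partial derivative along the diagonal therefore equals the Shapley value, because the average of $\partial_e\widetilde v(t\one)$ over $t$ reproduces the Beta-weighted average of marginal contributions.
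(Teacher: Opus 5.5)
Your proposal is correct and follows the paper's route: substitute the diagonal derivative \eqref{eq:pathderiv} into Owen's identity \eqref{eq:owen}, change variables to $u=1-t$, and split the numerator $a_i^{(r)}+2b_i^{(r)}u$ to obtain $I_{r0}$ and $I_{r1}$. You also derive \eqref{eq:pathderiv} from Lemma~\ref{lem:grad} on the diagonal and verify that \eqref{eq:multiv} is the multilinear extension of $v$ (independence across modes, pairwise survival within a mode), which fills in details the paper asserts without proof but does not change the argument.
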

\begin{proof}
Substitute \eqref{eq:pathderiv} into \eqref{eq:owen}, change variables from $t$ to $u=1-t$, and separate the coefficient of $a_i^{(r)}$ from that of $2b_i^{(r)}$. This gives exactly \eqref{eq:phi}. The product in \eqref{eq:Irj} has degree at most $2(R-1)$, so the integrals can be evaluated exactly from polynomial coefficients.
\end{proof}

For direct evaluation of the integrals in \eqref{eq:Irj}, expand the other-mode product as
\[
 \prod_{s\ne r}\left(\frac{A_s}{Z_s}u+\frac{D_s}{Z_s}u^2\right)
 =\sum_{d=0}^{2(R-1)}c_{r,d}u^d.
\]
It then follows that
\[
 I_{r0}=\sum_d\frac{c_{r,d}}{d+1},\qquad
 I_{r1}=\sum_d\frac{c_{r,d}}{d+2}.
\]
The implementation evaluates these expressions directly from the polynomial coefficients, so coalition enumeration is needed only for small-instance verification and not for the optimization algorithm.

\begin{corollary}[Vector FFW--Shapley identity]\label{cor:ffw}
As the $R=1$ specialization of Lemma~\ref{lem:phi},
\begin{equation}
 \phi_i^{(1)}=\frac{a_i^{(1)}+b_i^{(1)}}{Z_1}. \label{eq:vectorphi}
\end{equation}
\end{corollary}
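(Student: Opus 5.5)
The plan is to specialize Lemma~\ref{lem:phi} to $R=1$ and evaluate the two path integrals in \eqref{eq:Irj} explicitly. When $R=1$, the product $\prod_{s\ne r}$ in \eqref{eq:Irj} runs over the empty set, so it equals $1$. In the polynomial expansion this means $c_{1,0}=1$ and all other coefficients vanish. The same conclusion follows from \eqref{eq:pathderiv}, where the other-mode factor is absent.

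With the product equal to $1$, the integrals are elementary:
\[
 I_{10}=\int_0^1 du=1,\qquad I_{11}=\int_0^1u\,du=\tfrac12.
\]
Substituting these into \eqref{eq:phi} gives
\[
 \phi_i^{(1)}=\frac{a_i^{(1)}\cdot 1+2b_i^{(1)}\cdot\tfrac12}{Z_1}=\frac{a_i^{(1)}+b_i^{(1)}}{Z_1},
\]
which is \eqref{eq:vectorphi}.

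As a consistency check, sum over $i$. Since $Z_1=A_1+D_1$, the values sum to $(A_1+D_1)/Z_1=1=v(\mathcal E)$, which is the efficiency axiom for the normalized game. One can also verify the result directly. The $R=1$ game is $v(\mathcal C)=1-F_1(\mathcal N_1\setminus\mathcal C)/Z_1$, and $F_1$ is a sum of a singleton term $a_i$ per row plus a pairwise term $(\boldsymbol u_i^\top\boldsymbol u_j)^2$ per ordered pair. Each singleton term contributes its full weight to row $i$, and each unordered pair's weight $2(\boldsymbol u_i^\top\boldsymbol u_j)^2$ is split equally between $i$ and $j$ by symmetry. This again gives $a_i+b_i$.

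The only potential obstacle is the empty-product convention in \eqref{eq:Irj}. I would state this convention explicitly so the specialization is rigorous. The name ``FFW identity'' presumably refers to the FFW score, which in the vector case is proportional to $(\boldsymbol u_i)^\top\mathbf G_1^{\rm full}\boldsymbol u_i=a_i+b_i$. I would note this remark only in passing, since the corollary as stated requires just the computation above.
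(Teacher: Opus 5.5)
Your proposal is correct and is essentially the paper's proof: the empty product in \eqref{eq:Irj} gives $I_{10}=1$ and $I_{11}=1/2$, and substituting these into \eqref{eq:phi} yields \eqref{eq:vectorphi}. Your efficiency check and your direct verification, which splits each pair weight $2\bigl((\boldsymbol u_i^{(1)})^\top\boldsymbol u_j^{(1)}\bigr)^2$ equally between $i$ and $j$, are both valid integral-free confirmations, though neither is needed for the result.
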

\begin{proof}
For $R=1$, the product in \eqref{eq:Irj} is empty, hence $I_{10}=1$ and $I_{11}=1/2$. Substitution into \eqref{eq:phi} gives \eqref{eq:vectorphi}. Equation \eqref{eq:pathderiv} reduces to $(a_i^{(1)}+2b_i^{(1)}u)/Z_1$, whose integral on $[0,1]$ is its midpoint value.
\end{proof}
The normalized vector FFW score and the Shapley value therefore have the same numerical ordering. FFW keeps the rows with the smallest retained-design scores, whereas the complement game ranks removals by the corresponding largest benefits. The marginal path is affine in $u$, so its integral equals its midpoint value.

The tensor formula has a different interpretation. The coefficients $I_{r0}$ and $I_{r1}$ depend on every other mode. Thus rows in different modes are calibrated by the residual FP trajectories that coexist with them along random coalition-arrival times. Consequently, no independent mode normalization is needed to define $\phi$.

\subsection{Spectral gradient certificates}
The exact coordinate in Lemma~\ref{lem:grad} uses a $K_r\times K_r$ quadratic form. We first build a mean-state proxy, then tighten it with the current modal spectrum.

Let
\begin{equation}
 \bar y_r=\frac1{N_r}\sum_i y_i^{(r)},\qquad
 \Delta_r=\max_i|y_i^{(r)}-\bar y_r|,\qquad
 d_i^{(r)}=\|\boldsymbol u_i^{(r)}\|_2^2. \label{eq:ybar}
\end{equation}
The mean-path proxy and its valid radius are
\begin{equation*}
 q_{i,\mathrm{mean}}^{(r)}=\gamma_r(a_i^{(r)}+2b_i^{(r)}\bar y_r),\qquad
 \rho_{i,\mathrm{mean}}^{(r)}=2\gamma_rb_i^{(r)}\Delta_r.
\end{equation*}
This defines the mean-state certificate.

Using the full Gram defined above, introduce the centered state, scalar center, and centered residual
\[
 \mathbf M_r=\mathbf G_r(\boldsymbol x)-\bar y_r\mathbf G_r^{\rm full},\qquad
 \mu_r=\frac{\operatorname{tr}(\mathbf M_r)}{K_r},\qquad
 \mathbf R_r=\mathbf M_r-\mu_r\mathbf I_{K_r}.
\]
These quantities isolate the nonuniform component of the current fractional state from its mean-state approximation.

\begin{lemma}[Certified spectral gradient interval]\label{lem:spectral}
For every state $\boldsymbol x$ and coordinate $(r,i)$,
\begin{equation}
 g_i^{(r)}(\boldsymbol x)=\widehat q_i^{(r)}(\boldsymbol x)+2\gamma_r(\boldsymbol x)(\boldsymbol u_i^{(r)})^\top \mathbf R_r(\boldsymbol x)\boldsymbol u_i^{(r)}, \label{eq:specdecomp}
\end{equation}
where
\begin{equation}
 \widehat q_i^{(r)}=\gamma_r\!\left[a_i^{(r)}+2b_i^{(r)}\bar y_r+2\mu_rd_i^{(r)}-2a_i^{(r)}(y_i^{(r)}-\bar y_r)\right]. \label{eq:qhat}
\end{equation}
Therefore,
\begin{equation}
 |g_i^{(r)}-\widehat q_i^{(r)}|\le \rho_{i,\mathrm{spec}}^{(r)}:=2\gamma_r\|\mathbf R_r\|_2d_i^{(r)}. \label{eq:specradius}
\end{equation}
Equivalently,
\[
 g_i^{(r)}
 \in
 \left[
 \widehat q_i^{(r)}-\rho_{i,\mathrm{spec}}^{(r)},
 \widehat q_i^{(r)}+\rho_{i,\mathrm{spec}}^{(r)}
 \right].
\]
\end{lemma}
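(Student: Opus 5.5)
Starting from Lemma~\ref{lem:grad}, I will rewrite the bracket using the quadratic-form identity stated after that lemma. The sum over $j\ne i$ equals the full sum over $j$ minus the diagonal term $a_i^{(r)}y_i^{(r)}$. Hence
\[
 g_i^{(r)}=\gamma_r\left[a_i^{(r)}+2(\boldsymbol u_i^{(r)})^\top\mathbf G_r(\boldsymbol x)\boldsymbol u_i^{(r)}-2a_i^{(r)}y_i^{(r)}\right].
\]
This reduces the whole statement to decomposing a single quadratic form in $\mathbf G_r(\boldsymbol x)$.

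Next I will substitute $\mathbf G_r(\boldsymbol x)=\bar y_r\mathbf G_r^{\rm full}+\mu_r\mathbf I_{K_r}+\mathbf R_r$, which is just the definition of $\mathbf M_r$ and $\mathbf R_r$ rearranged. I then evaluate each piece.
\begin{itemize}
\item From the identity $(\boldsymbol u_i^{(r)})^\top\mathbf G_r^{\rm full}\boldsymbol u_i^{(r)}=a_i^{(r)}+b_i^{(r)}$, the first piece contributes $\bar y_r(a_i^{(r)}+b_i^{(r)})$.
\item The identity piece contributes $\mu_r d_i^{(r)}$.
\item The remaining piece is $(\boldsymbol u_i^{(r)})^\top\mathbf R_r\boldsymbol u_i^{(r)}$.
\end{itemize}
Collecting terms gives $a_i^{(r)}+2b_i^{(r)}\bar y_r+2\mu_rd_i^{(r)}+2a_i^{(r)}\bar y_r-2a_i^{(r)}y_i^{(r)}+2(\boldsymbol u_i^{(r)})^\top\mathbf R_r\boldsymbol u_i^{(r)}$. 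The two $a_i^{(r)}$ terms combine to $-2a_i^{(r)}(y_i^{(r)}-\bar y_r)$. Multiplying through by $\gamma_r$ yields exactly \eqref{eq:specdecomp} with $\widehat q_i^{(r)}$ as in \eqref{eq:qhat}.

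For the radius, $\mathbf R_r$ is symmetric, because $\mathbf G_r(\boldsymbol x)$ and $\mathbf G_r^{\rm full}$ are. So the Rayleigh bound gives $|(\boldsymbol u_i^{(r)})^\top\mathbf R_r\boldsymbol u_i^{(r)}|\le\|\mathbf R_r\|_2\|\boldsymbol u_i^{(r)}\|_2^2=\|\mathbf R_r\|_2d_i^{(r)}$. I also need $\gamma_r\ge0$. This holds because each $H_s$ is a nonnegative combination of nonnegative terms for $\boldsymbol y\in[0,1]^{N_s}$, by \eqref{eq:Hexpanded}, and $Z_s>0$. Multiplying by $2\gamma_r$ gives \eqref{eq:specradius}, and the interval form is a restatement of it.

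The only real obstacle is bookkeeping the diagonal correction. The gradient formula excludes $j=i$, while the Gram quadratic form includes it. Getting the sign and weight of $a_i^{(r)}$ right is exactly what produces the $-2a_i^{(r)}(y_i^{(r)}-\bar y_r)$ term in \eqref{eq:qhat}.
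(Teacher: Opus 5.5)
Your proof is correct and follows essentially the paper's route. Both arguments rewrite the sum over $j\ne i$ in Lemma~\ref{lem:grad} as a modal quadratic form minus the diagonal term, split off $\mu_r\mathbf I_{K_r}$ and $\mathbf R_r$, and finish with the Rayleigh bound. The paper centers $y_j^{(r)}$ at $\bar y_r$ before forming the quadratic form in $\mathbf M_r$, whereas you form $(\boldsymbol u_i^{(r)})^\top\mathbf G_r(\boldsymbol x)\boldsymbol u_i^{(r)}$ first and then use $(\boldsymbol u_i^{(r)})^\top\mathbf G_r^{\rm full}\boldsymbol u_i^{(r)}=a_i^{(r)}+b_i^{(r)}$; this is only a reordering of the same algebra. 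Your check that $\gamma_r\ge0$ supplies a step the paper leaves implicit when deriving \eqref{eq:specradius}.
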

\begin{proof}
Because $\mathbf M_r=\sum_j(y_j^{(r)}-\bar y_r)\boldsymbol u_j^{(r)}(\boldsymbol u_j^{(r)})^\top$,
\begin{align*}
 \sum_{j\ne i}\left((\boldsymbol u_i^{(r)})^\top\boldsymbol u_j^{(r)}\right)^2(y_j^{(r)}-\bar y_r)
 &=(\boldsymbol u_i^{(r)})^\top \mathbf M_r\boldsymbol u_i^{(r)}-a_i^{(r)}(y_i^{(r)}-\bar y_r). 
\end{align*}
Insert this identity into \eqref{eq:grad}, substitute $\mathbf M_r=\mu_r\mathbf I+\mathbf R_r$, and use $(\boldsymbol u_i^{(r)})^\top \boldsymbol u_i^{(r)}=d_i^{(r)}$ to obtain \eqref{eq:specdecomp}--\eqref{eq:qhat}. The Rayleigh bound $|\boldsymbol u^\top\mathbf R_r\boldsymbol u|\le\|\mathbf R_r\|_2\|\boldsymbol u\|_2^2$ gives \eqref{eq:specradius}.
\end{proof}
Thus Lemma~\ref{lem:spectral} provides a directly computable safe interval for every current gradient coordinate. Both the mean-path and spectral intervals are independently valid, so we use their intersection:
\begin{align}
 \ell_i^{(r)}&=\max\{0,q_{i,\mathrm{mean}}^{(r)}-\rho_{i,\mathrm{mean}}^{(r)},\widehat q_i^{(r)}-\rho_{i,\mathrm{spec}}^{(r)}\},\nonumber\\
 u_i^{(r)}&=\min\{q_{i,\mathrm{mean}}^{(r)}+\rho_{i,\mathrm{mean}}^{(r)},\widehat q_i^{(r)}+\rho_{i,\mathrm{spec}}^{(r)}\}. \label{eq:intersect}
\end{align}
For a tagged element $e=(r,i)$, we use the shorthand
\[
\phi_e:=\phi_i^{(r)},\qquad
\ell_e:=\ell_i^{(r)},\qquad
u_e:=u_i^{(r)},
\]
and, when referring to the spectral interval alone,
\[
\widehat q_e:=\widehat q_i^{(r)},\qquad
\rho_e:=\rho_{i,\mathrm{spec}}^{(r)}.
\]
The intersected interval is therefore no wider than either input interval. Its construction requires one eigenvalue computation of a symmetric $K_r\times K_r$ matrix per mode and no $N_r\times N_r$ pairwise table. At the mean state $\boldsymbol y^{(r)}=\bar y_r\boldsymbol 1$, $\mathbf M_r=\mathbf R_r=0$, so the spectral interval collapses to the exact mean-state gradient.

\subsection{Certified direction and selective exact queries}
The linear maximization oracle (LMO) chooses the feasible base with the largest current aggregate gradient. For a feasible removal base $\mathcal A\in\cB_B$, define the optimistic coordinate weights
\begin{equation}
 z_e(\mathcal A)=\begin{cases}\ell_e,&e\in\mathcal A,\\u_e,&e\notin\mathcal A,\end{cases}\qquad
 \mathcal C^+(\mathcal A)\in\arg\max_{\mathcal C\in\cB_B}\sum_{e\in\mathcal C}z_e(\mathcal A), \label{eq:z}
\end{equation}
and the certificate gap
\begin{equation}
 \epsilon_{\mathcal A}=\sum_{e\in\mathcal C^+(\mathcal A)}z_e(\mathcal A)-\sum_{e\in\mathcal A}\ell_e. \label{eq:epsA}
\end{equation}
Uncertainty on coordinates common to both bases cancels.

\begin{lemma}[Certificate-gap bound]\label{lem:lmo-gap}
For every feasible $\mathcal C$,
\begin{equation*}
 g(\mathcal C)-g(\mathcal A)\le\sum_{e\in \mathcal C\setminus\mathcal A}u_e-\sum_{e\in\mathcal A\setminus\mathcal C}\ell_e\le\epsilon_{\mathcal A}.
\end{equation*}
\end{lemma}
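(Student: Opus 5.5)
The plan is to prove the two inequalities separately. Both rest on the fact that every coordinate satisfies $\ell_e\le g_e\le u_e$, which follows from Lemma~\ref{lem:spectral} together with the mean-path interval and nonnegativity of the gradient.

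\textbf{Step 1: the intersected interval is valid.} The mean-path radius is valid because
\[
g_i^{(r)}-q_{i,\mathrm{mean}}^{(r)}=2\gamma_r\sum_{j\ne i}\big((\boldsymbol u_i^{(r)})^\top\boldsymbol u_j^{(r)}\big)^2\,(y_j^{(r)}-\bar y_r).
\]
Its absolute value is at most $2\gamma_r b_i^{(r)}\Delta_r$. The spectral interval is valid by Lemma~\ref{lem:spectral}, and $g_e\ge0$ by \eqref{eq:grad}, since $\gamma_r\ge0$ and $y\in[0,1]$. Hence $g_e\in[\ell_e,u_e]$, as defined in \eqref{eq:intersect}.

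\textbf{Step 2: first inequality.} Write
\[
g(\mathcal C)-g(\mathcal A)=\sum_{e\in\mathcal C\setminus\mathcal A}g_e-\sum_{e\in\mathcal A\setminus\mathcal C}g_e,
\]
because the common coordinates in $\mathcal C\cap\mathcal A$ cancel. Bound each term in the first sum above by $u_e$, and each term in the second sum below by $\ell_e$. This gives the middle expression.

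\textbf{Step 3: second inequality.} Evaluate the $z$-weights of \eqref{eq:z} on the feasible base $\mathcal C$:
\[
\sum_{e\in\mathcal C}z_e(\mathcal A)=\sum_{e\in\mathcal C\cap\mathcal A}\ell_e+\sum_{e\in\mathcal C\setminus\mathcal A}u_e.
\]
Subtract $\sum_{e\in\mathcal A}\ell_e=\sum_{e\in\mathcal C\cap\mathcal A}\ell_e+\sum_{e\in\mathcal A\setminus\mathcal C}\ell_e$. The result is exactly the middle expression:
\[
\sum_{e\in\mathcal C}z_e(\mathcal A)-\sum_{e\in\mathcal A}\ell_e=\sum_{e\in\mathcal C\setminus\mathcal A}u_e-\sum_{e\in\mathcal A\setminus\mathcal C}\ell_e.
\]
Since $\mathcal C\in\cB_B$ and $\mathcal C^+(\mathcal A)$ maximizes $\sum_{e}z_e(\mathcal A)$ over $\cB_B$,
\[
\sum_{e\in\mathcal C}z_e(\mathcal A)\le\sum_{e\in\mathcal C^+(\mathcal A)}z_e(\mathcal A).
\]
Subtracting $\sum_{e\in\mathcal A}\ell_e$ from both sides gives the middle expression $\le\epsilon_{\mathcal A}$ by \eqref{eq:epsA}.

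\textbf{Main obstacle.} Steps 2 and 3 are bookkeeping. The only point needing care is Step 1. There I must confirm that the mean-path radius bound holds with the $j\ne i$ sum, and that clipping at $0$ in $\ell_e$ is justified by the nonnegativity of $g_e$.
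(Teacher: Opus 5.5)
Your proof is correct and follows the paper's argument. You bound the symmetric-difference terms coordinatewise by $u_e$ and $\ell_e$, then recognize the middle expression as $\sum_{e\in\mathcal C}z_e(\mathcal A)-\sum_{e\in\mathcal A}\ell_e$ and invoke the maximality of $\mathcal C^+(\mathcal A)$ over $\cB_B$. Your Step~1, which checks the mean-path radius and the clipping at zero, makes explicit the interval validity $\ell_e\le g_e\le u_e$ that the paper takes for granted; queried coordinates, where $\ell_e=u_e=g_e$, satisfy it trivially.
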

\begin{proof}
Apply the coordinatewise bounds $g_e\le u_e$ and $g_e\ge\ell_e$ to the elements in the symmetric difference of $\mathcal C$ and $\mathcal A$, then maximize the resulting upper bound over feasible bases using the definition of $\mathcal C^+(\mathcal A)$.
\end{proof}

$g(\mathcal A)$ can be evaluated without querying all $B$ coordinate gradients. Define
\begin{equation*}
 \mathbf S_{\mathcal A,r}=\sum_{i\in \mathcal A_r}\boldsymbol u_i^{(r)}(\boldsymbol u_i^{(r)})^\top
 =\mathbf G_r^{\rm full}-\mathbf U_r(\mathcal N_r\setminus\mathcal A_r)^\top \mathbf U_r(\mathcal N_r\setminus\mathcal A_r).
\end{equation*}
Summing \eqref{eq:grad} over $\mathcal A_r$ gives
\begin{equation}
 g(\mathcal A)=\sum_r\gamma_r\!\left[\sum_{i\in \mathcal A_r}a_i^{(r)}+2\langle \mathbf G_r,\mathbf S_{\mathcal A,r}\rangle_F-2\sum_{i\in \mathcal A_r}a_i^{(r)}y_i^{(r)}\right]. \label{eq:aggregate}
\end{equation}
For fixed retained budget $L$, \eqref{eq:aggregate} uses only modal Grams, scalar sums, and at most $L$ retained directional rows.

\begin{lemma}[Certified multiplicative oracle]\label{lem:tau}
For a candidate $\mathcal A$ with certificate gap $\epsilon_{\mathcal A}$,
\begin{equation}
 g(\mathcal A)\le
 \max_{\mathcal C\in\cB_B}g(\mathcal C)
 \le g(\mathcal A)+\epsilon_{\mathcal A}. \label{eq:Mupper}
\end{equation}
Define
\begin{equation}
 \tau_{\mathcal A}=
 \begin{cases}
 \dfrac{g(\mathcal A)}{g(\mathcal A)+\epsilon_{\mathcal A}},
 & g(\mathcal A)+\epsilon_{\mathcal A}>0,\\[1ex]
 1, & \text{otherwise}.
 \end{cases}
 \label{eq:tau}
\end{equation}
Then
\begin{equation}
 g(\mathcal A)\ge\tau_{\mathcal A}\max_{\mathcal C\in\cB_B}g(\mathcal C). \label{eq:multlmo}
\end{equation}
\end{lemma}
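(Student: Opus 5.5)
The plan is to obtain \eqref{eq:Mupper} directly from Lemma~\ref{lem:lmo-gap}, and then to convert this additive sandwich into the multiplicative bound \eqref{eq:multlmo}.

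\emph{Step 1: the sandwich.} The lower inequality in \eqref{eq:Mupper} is immediate, since $\mathcal A\in\cB_B$ is itself a candidate in the maximum. For the upper inequality, Lemma~\ref{lem:lmo-gap} gives $g(\mathcal C)\le g(\mathcal A)+\epsilon_{\mathcal A}$ for every feasible $\mathcal C$. Taking the maximum over the finite family $\cB_B$ yields the claim. A side consequence is $\epsilon_{\mathcal A}\ge0$.

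\emph{Step 2: sign information.} Nonnegativity of the gradient is used next. Each $g_e(\boldsymbol x)\ge0$ by \eqref{eq:grad}, because $\gamma_r\ge0$ (the $H_s$ are expected frame potentials and hence nonnegative), $a_i^{(r)}\ge0$, and $y_j^{(r)}\in[0,1]$. Therefore $g(\mathcal A)\ge0$ and $M:=\max_{\mathcal C}g(\mathcal C)\ge0$.

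\emph{Step 3: case split on \eqref{eq:tau}.}
\begin{itemize}
\item If $g(\mathcal A)+\epsilon_{\mathcal A}>0$, then \eqref{eq:Mupper} gives $M\le g(\mathcal A)+\epsilon_{\mathcal A}$. Multiplying by $\tau_{\mathcal A}\ge0$ gives $\tau_{\mathcal A}M\le\tau_{\mathcal A}(g(\mathcal A)+\epsilon_{\mathcal A})=g(\mathcal A)$.
\item Otherwise $g(\mathcal A)+\epsilon_{\mathcal A}\le0$. Since both terms are nonnegative, this forces $g(\mathcal A)=\epsilon_{\mathcal A}=0$. Then \eqref{eq:Mupper} gives $M=0=g(\mathcal A)$, so \eqref{eq:multlmo} holds with $\tau_{\mathcal A}=1$.
\end{itemize}

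The only delicate point is the sign argument in Step 2. Without $g\ge0$, multiplying by $\tau_{\mathcal A}$ could flip the inequality, and the degenerate branch would not force $M=0$. So the key task is to verify from Lemma~\ref{lem:grad} that every coordinate of the gradient is nonnegative, which is the monotonicity of $\widetilde v$. This also shows $\tau_{\mathcal A}\in[0,1]$.
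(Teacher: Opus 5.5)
Your proposal is correct and takes the same route as the paper: the sandwich \eqref{eq:Mupper} comes from Lemma~\ref{lem:lmo-gap} applied to a maximizing base, and \eqref{eq:multlmo} then follows by rearrangement. Your Step~2 spells out what the paper's one-line ``rearranging'' leaves implicit, namely the sign condition $g_e\ge0$ (from \eqref{eq:grad} with $\boldsymbol y\in[0,1]$), which is needed both to multiply by $\tau_{\mathcal A}\ge0$ and to force $M=0$ in the degenerate branch.
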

\begin{proof}
Lemma~\ref{lem:lmo-gap} applied to an exact maximizing base gives \eqref{eq:Mupper}. Rearranging yields \eqref{eq:multlmo}.
\end{proof}
Thus $\tau_{\mathcal A}$ is a computable multiplicative certificate for the fraction of the exact LMO value achieved by the current base. It is used as the stopping criterion for selective exact-gradient refinement.

\begin{corollary}[Margin-separated zero-query LMO]\label{cor:zeroquery}
Let $\mathcal A\in\cB_B$ be a feasible base. If
\begin{equation}
 \ell_e\ge u_f
 \qquad\text{for every feasible exchange }\mathcal A-e+f,
 \quad e\in\mathcal A,\ f\notin\mathcal A, \label{eq:zeroquerysep}
\end{equation}
then $\mathcal A$ is an exact maximum-weight base for the true gradient, $\epsilon_{\mathcal A}=0$, and $\tau_{\mathcal A}=1$.
\end{corollary}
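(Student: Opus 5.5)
The plan is to reduce the claim to the exchange characterization of maximum-weight bases in a matroid, and then read off $\epsilon_{\mathcal A}=0$ from the definition \eqref{eq:epsA}.

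\emph{Step 1 (matroid structure).} The family $\cB_B$ is the set of bases of a truncated partition matroid on $\cE$. Its independent sets are those $\mathcal C$ with $|\mathcal C_r|\le N_r-\kappa_r$ for every $r$ and $|\mathcal C|\le B$. For any weight vector $w$ on $\cE$, the classical matroid optimality criterion says that a base $\mathcal A$ maximizes $\sum_{e\in\mathcal C}w_e$ over all bases if and only if $w_e\ge w_f$ for every feasible single exchange $\mathcal A-e+f\in\cB_B$. I would invoke this criterion as standard, with necessity being trivial. If a self-contained argument is preferred, the strong basis-exchange property supplies a bijection $\pi:\mathcal C\setminus\mathcal A\to\mathcal A\setminus\mathcal C$ with $\mathcal A-\pi(f)+f\in\cB_B$ for every $f$. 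Summing the pairwise inequalities along $\pi$ then gives $w(\mathcal C)\le w(\mathcal A)$.

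\emph{Step 2 (apply to $z(\mathcal A)$).} Take $w=z(\mathcal A)$ from \eqref{eq:z}, so $w_e=\ell_e$ on $\mathcal A$ and $w_f=u_f$ off $\mathcal A$. Hypothesis \eqref{eq:zeroquerysep} is exactly the exchange condition for this weight. Hence $\mathcal A$ itself attains $\max_{\mathcal C\in\cB_B}\sum_{e\in\mathcal C}z_e(\mathcal A)$. The maximum value is therefore $\sum_{e\in\mathcal A}\ell_e$, and \eqref{eq:epsA} gives $\epsilon_{\mathcal A}=0$. The quantity $\epsilon_{\mathcal A}$ depends only on the optimal value, so the choice of $\mathcal C^+$ in the argmax does not matter.

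\emph{Step 3 (exactness and $\tau$).} Lemma~\ref{lem:tau} with $\epsilon_{\mathcal A}=0$ gives $\max_{\mathcal C\in\cB_B} g(\mathcal C)=g(\mathcal A)$, so $\mathcal A$ is an exact maximum-weight base for the true gradient. Alternatively, the chain $\ell_e\le g_e$ and $g_f\le u_f$ shows that the true gradient also satisfies the exchange condition. For $\tau_{\mathcal A}$, the gradient coordinates are nonnegative by \eqref{eq:grad}, since $\gamma_r\ge0$, $a_i^{(r)}\ge0$ and $y\ge0$. If $g(\mathcal A)>0$, then \eqref{eq:tau} gives $\tau_{\mathcal A}=g(\mathcal A)/g(\mathcal A)=1$. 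Otherwise $g(\mathcal A)+\epsilon_{\mathcal A}=0$, which falls in the second branch of \eqref{eq:tau}, so again $\tau_{\mathcal A}=1$.

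The main obstacle is Step 1, specifically that the single-exchange criterion characterizes optimality over the truncated partition matroid. Within the same mode, a swap $e,f$ is always feasible. Across modes, a swap is feasible only when $f$'s mode has slack below its cap $N_r-\kappa_r$. One must make sure that the exchange bijection respects exactly these feasible swaps, and this is precisely what strong basis exchange guarantees.
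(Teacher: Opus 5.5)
Your proof is correct and essentially the paper's: both rest on the matroid exchange characterization of maximum-weight bases, except that you apply it first to the optimistic weights $z(\mathcal A)$ (for which \eqref{eq:zeroquerysep} is exactly the exchange condition, giving $\epsilon_{\mathcal A}=0$) and then recover exactness from Lemma~\ref{lem:tau}, whereas the paper applies it first to the true gradient via $g_e\ge\ell_e\ge u_f\ge g_f$ and then notes that the same inequalities force $\epsilon_{\mathcal A}=0$. One small correction to your self-contained variant: the bijection $\pi$ with $\mathcal A-\pi(f)+f\in\cB_B$ for every $f$ is Brualdi's bijective exchange theorem (usually proved via Hall's theorem), not a direct consequence of the symmetric strong-exchange axiom, which supplies only one exchange partner per element.
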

\begin{proof}
Condition~\eqref{eq:zeroquerysep} implies that every feasible one-element exchange from $\mathcal A$ has nonpositive true-gradient gain. By the matroid exchange characterization of maximum-weight bases, $\mathcal A$ maximizes the exact gradient. The same inequalities imply that the optimistic competitor $\mathcal C^+(\mathcal A)$ cannot improve on $\mathcal A$, so \eqref{eq:epsA} gives $\epsilon_{\mathcal A}=0$ and Lemma~\ref{lem:tau} gives $\tau_{\mathcal A}=1$.
\end{proof}
Corollary~\ref{cor:zeroquery} certifies the exact LMO without any exact coordinate query. For symmetric intervals $[\widehat q_e-\rho_e,\widehat q_e+\rho_e]$, it is sufficient that $\widehat q_e-\widehat q_f\ge\rho_e+\rho_f$ on every feasible boundary exchange.

The resulting stopping rule is scale free: exact coordinate gradients are queried only until $\tau_{\mathcal A}\ge\tau_0$, with $\tau_0=0.9999$ in all reported experiments. Corollary~\ref{cor:zeroquery} identifies the zero-query regime explicitly. Otherwise, exact evaluations are confined to unresolved exchange candidates encountered during recertification. Within such an unresolved set, the implementation prioritizes coordinates by
\begin{equation}
 (u_e-\ell_e)\left(1+\frac{\phi_e}{\max_f\phi_f}\right), \label{eq:querypriority}
\end{equation}
in descending order. If the active unresolved set is $\mathcal U_t$, one recertification queries
\begin{equation}
 b_t=\min\bigl\{|\mathcal U_t|,\max(2,\lceil|\mathcal U_t|/4\rceil)\bigr\}
 \label{eq:querybatch}
\end{equation}
coordinates. This priority affects only which safe intervals are collapsed first. The certificate remains valid for any query order. The resulting saving is instance dependent, consistent with the absence of a general sublinear value-oracle guarantee \cite{peng2025lower}.

For the implementation, define the clipped spectral proxy
\[
q_e=\min\{u_e,\max\{\widehat q_e,\ell_e\}\},
\]
and let
\[
\operatorname{Base}(\boldsymbol w)
\in
\arg\max_{\mathcal C\in\cB_B}
\sum_{e\in\mathcal C}w_e.
\]
At each outer step the initial candidate is $\mathcal A=\operatorname{Base}(\boldsymbol q)$. After each exact-query batch, we retain the member of
\[
\left\{\mathcal A,\,\operatorname{Base}(\boldsymbol q),\,\operatorname{Base}(\boldsymbol\ell)\right\}
\]
with the largest exact aggregate derivative \eqref{eq:aggregate}. All calls to $\operatorname{Base}$ are exact maximum-weight truncated-partition-matroid linear oracles.

\subsection{Residual-controlled adaptive steps and algorithm}

\subsubsection{Gram-only state and directional-polynomial updates}
At adaptive step $t$, let $\boldsymbol x_t$ be the current fractional state and let $\boldsymbol 1_{\mathcal C}\in\{0,1\}^{|\cE|}$ denote the incidence vector of a removal set $\mathcal C$. For the accepted base $\mathcal C_t$, define $\boldsymbol h_t:=\boldsymbol 1_{\mathcal C_t}$.
\begin{equation}
\begin{aligned}
\mathbf S_{r,t}
&:=
\sum_{i\in\mathcal C_{t,r}}
\boldsymbol u_i^{(r)}(\boldsymbol u_i^{(r)})^\top,\\
P_{r,t}
&:=
\sum_{i\in\mathcal C_{t,r}}a_i^{(r)}
+2\left\langle\mathbf G_r(\boldsymbol x_t),\mathbf S_{r,t}\right\rangle_F
-2\sum_{i\in\mathcal C_{t,r}}a_i^{(r)}\bigl(1-x_{t,i}^{(r)}\bigr),\\
Q_{r,t}
&:=
\|\mathbf S_{r,t}\|_F^2
-\sum_{i\in\mathcal C_{t,r}}a_i^{(r)}
\ge0,\\
H_r(\boldsymbol x_t+s\boldsymbol h_t)
&=
H_r(\boldsymbol x_t)-sP_{r,t}+s^2Q_{r,t}.
\end{aligned}
\label{eq:poly}
\end{equation}
Set $D_t:=g(\mathcal C_t)$ evaluated at $\boldsymbol x_t$. The same directional derivative is obtained from the polynomial \eqref{eq:poly} and agrees analytically with the aggregate LMO derivative in \eqref{eq:aggregate}. This equality is verified at every adaptive step in the implementation.

\subsubsection{Residual-controlled step selection}
At adaptive step $t$, let $s_{\rm rem}$ be the current remaining path length and set $V_t:=\widetilde v(\boldsymbol x_t)$. For $0\le s\le s_{\rm rem}$, define
\begin{equation}
 \mathcal R_t(s)=sD_t-[\widetilde v(\boldsymbol x_t+s \boldsymbol h_t)-V_t]. \label{eq:residual}
\end{equation}
Monotone diminishing-returns (DR) submodularity implies concavity of $\widetilde v(\boldsymbol x_t+s \boldsymbol h_t)$ in the nonnegative direction $\boldsymbol h_t$, hence $\mathcal R_t(s)\ge0$ and the normalized residual $\mathcal R_t(s)/(sD_t)$ is nondecreasing when $D_t>0$. Because \eqref{eq:poly} makes the objective along the line a product of explicit quadratics, the largest step obeying
\begin{equation}
 \mathcal R_t(s_t)\le\eta s_tD_t,\qquad 0\le\eta<1, \label{eq:rescontrol}
\end{equation}
is obtained by a scalar monotone search without additional coordinate gradients. We use a single $\eta=0.25$ in the reported experiments.

\begin{algorithm}[t]
\caption{Spectral-Residual Continuous Greedy (SR-CG)}
\label{alg:srcg-full}
\begin{algorithmic}[1]
\Require Modal factors $\{\mathbf U_r\}$, retained budget $L$, thresholds $\tau_0$ and $\eta$
\State Precompute $\mathbf G_r^{\rm full}=\mathbf U_r^\top\mathbf U_r$, $a_i^{(r)}$, $b_i^{(r)}$, $Z_r$, and $\phi_i^{(r)}$
\State $\boldsymbol x\gets\boldsymbol0$, $\boldsymbol y\gets\boldsymbol1$, $\mathbf G_r\gets\mathbf G_r^{\rm full}$, $s_{\rm rem}\gets1$
\While{$s_{\rm rem}>0$}
  \State Compute $[\ell_e,u_e]$ by \eqref{eq:intersect}, then set $q_e\gets\min\{u_e,\max\{\widehat q_e,\ell_e\}\}$ and $\mathcal A\gets\operatorname{Base}(\boldsymbol q)$
  \State Compute $\mathcal C^+(\mathcal A)$ by \eqref{eq:z}, $\epsilon_{\mathcal A}$ by \eqref{eq:epsA}, and $\tau_{\mathcal A}$ by \eqref{eq:tau}, using $g(\mathcal A)$ from \eqref{eq:aggregate}
  \While{$\tau_{\mathcal A}<\tau_0$}
    \State Set $\mathcal U_t\gets(\mathcal A\triangle\mathcal C^+(\mathcal A))\cap\{e:\ell_e<u_e\}$. If $\mathcal U_t$ is empty, use all unresolved coordinates
    \State Compute priorities by \eqref{eq:querypriority} and $b_t$ by \eqref{eq:querybatch}
    \State For the $b_t$ highest-priority $e=(r,i)\in\mathcal U_t$, evaluate $g_i^{(r)}(\boldsymbol x)$ by \eqref{eq:grad} and set $\ell_e\gets g_i^{(r)}(\boldsymbol x)$, $u_e\gets g_i^{(r)}(\boldsymbol x)$
    \State Set $\mathcal A\gets\arg\max_{\mathcal C\in\{\mathcal A,\operatorname{Base}(\boldsymbol\ell),\operatorname{Base}(\boldsymbol q)\}}g(\mathcal C)$ using \eqref{eq:aggregate}
    \State Recompute $\mathcal C^+(\mathcal A)$, $\epsilon_{\mathcal A}$, and $\tau_{\mathcal A}$ using \eqref{eq:z}--\eqref{eq:tau}
  \EndWhile
  \State Set $\mathcal C_t\gets\mathcal A$ and form $\mathbf S_{r,t},P_{r,t},Q_{r,t}$ by \eqref{eq:poly}
  \State Choose the largest $s_t\le s_{\rm rem}$ satisfying \eqref{eq:rescontrol} by the scalar monotone search for \eqref{eq:residual}
  \State Update $\boldsymbol x\gets\boldsymbol x+s_t\boldsymbol1_{\mathcal C_t}$, $\mathbf G_r\gets\mathbf G_r-s_t\mathbf S_{r,t}$, and $\boldsymbol y\gets\boldsymbol1-\boldsymbol x$, then set $s_{\rm rem}\gets s_{\rm rem}-s_t$
\EndWhile
\State While $\boldsymbol x$ is fractional, choose a feasible pairwise pipage exchange $(i,j)$ and endpoints $\lambda_-,\lambda_+$
\State Set $\lambda^\star\in\arg\max_{\lambda\in\{\lambda_-,\lambda_+\}}\widetilde v(\boldsymbol x+\lambda(\boldsymbol e_i-\boldsymbol e_j))$ and update to that endpoint, then repeat until $\mathcal C^0$ is obtained \cite{chekuri2010dependent} (Lemma~\ref{lem:round})
\State Set $\mathcal C\gets\mathcal C^0$, let $\mathcal P(\mathcal C)$ contain the $\min(2L,|\mathcal C|)$ removed elements with smallest lexicographic $(x_{T,e},\phi_e)$, and let $\mathcal K(\mathcal C)=\cE\setminus\mathcal C$ be ordered decreasingly by $(x_{T,e},\phi_e)$
\State Find $(i^\star,j^\star)\in\arg\min_{i\in\mathcal P(\mathcal C),\,j\in\mathcal K(\mathcal C):\,\mathcal C-i+j\in\cB_B}F(\mathcal L(\mathcal C-i+j))$
\State If it strictly lowers $F(\mathcal L(\mathcal C))$, set $\mathcal C\gets\mathcal C-i^\star+j^\star$, recompute $\mathcal P(\mathcal C)$ and $\mathcal K(\mathcal C)$, and repeat. Otherwise, terminate
\State \Return $\mathcal C_{\rm out}\gets\mathcal C$
\end{algorithmic}
\end{algorithm}

\section{Guarantees and complexity}
The analysis separates three effects: inexact direction certification, finite-step curvature loss, and discrete completion. We first state the variable-step guarantee, then give the rounding argument, computational cost, an instance-level FP lower certificate, and the relation between FP and least-squares reconstruction.

\subsection{Finite-step approximation guarantee}
The adaptive rule incorporates the realized residual directly into the optimization dynamics.
Let $T$ denote the number of accepted adaptive steps. For each $t$, let $g_t(\mathcal C)$ denote the aggregate gradient $g(\mathcal C)$ evaluated at $\boldsymbol x_t$, set
\[
\tau_t:=\tau_{\mathcal C_t},
\qquad
M_t:=\max_{\mathcal C\in\cB_B}g_t(\mathcal C).
\]

\begin{lemma}[Exact variable-step recurrence]\label{lem:finite-recurrence}
Suppose $D_t\ge\tau_tM_t$ and $\sum_{t=0}^{T-1}s_t=1$. For any optimal feasible base $\mathcal C^\star$,
\begin{equation}
 \widetilde v(\boldsymbol x_T)\ge
 \left[1-\prod_{t=0}^{T-1}(1-s_t\tau_t)\right]v(\mathcal C^\star)
 -\sum_{t=0}^{T-1}\mathcal R_t(s_t)\prod_{j=t+1}^{T-1}(1-s_j\tau_j). \label{eq:finite}
\end{equation}
\end{lemma}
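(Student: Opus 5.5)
The plan is the standard continuous-greedy one-step inequality, with the residual carried along explicitly and then unrolled. Write $V_t=\widetilde v(\boldsymbol x_t)$ and $v^\star=v(\mathcal C^\star)$.

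\textbf{Step 1 (exact one-step identity).} By the definition \eqref{eq:residual},
\[
V_{t+1}=V_t+s_tD_t-\mathcal R_t(s_t).
\]
This holds exactly because $\boldsymbol x_{t+1}=\boldsymbol x_t+s_t\boldsymbol h_t$. No curvature bound is used here; all finite-step loss stays inside $\mathcal R_t(s_t)$.

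\textbf{Step 2 (lower bound on the direction value).} From the hypothesis $D_t\ge\tau_tM_t$ we get $D_t\ge\tau_t g_t(\mathcal C^\star)$, since $\mathcal C^\star\in\cB_B$. It remains to show
\[
g_t(\mathcal C^\star)=\langle\nabla\widetilde v(\boldsymbol x_t),\boldsymbol 1_{\mathcal C^\star}\rangle\ge v^\star-V_t.
\]
This is the usual monotone-submodular fact. Put $\boldsymbol z=\boldsymbol x_t\vee\boldsymbol 1_{\mathcal C^\star}$. Monotonicity gives $\widetilde v(\boldsymbol z)\ge v^\star$. Concavity of $\widetilde v$ along the nonnegative direction $\boldsymbol z-\boldsymbol x_t\ge0$ (DR-submodularity, as already used after \eqref{eq:residual}) gives
\[
\widetilde v(\boldsymbol z)-V_t\le\langle\nabla\widetilde v(\boldsymbol x_t),\boldsymbol z-\boldsymbol x_t\rangle .
\]
Because $\boldsymbol z-\boldsymbol x_t\le\boldsymbol 1_{\mathcal C^\star}$ and the gradient is nonnegative by monotonicity (also visible from \eqref{eq:grad}, since every coordinate is nonnegative), the right-hand side is at most $g_t(\mathcal C^\star)$.

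A sign issue arises here: the inequality $D_t\ge\tau_t g_t(\mathcal C^\star)$ needs $\tau_t\ge0$. This holds because \eqref{eq:tau} yields $\tau\in[0,1]$ whenever $g(\mathcal A)\ge0$ and $\epsilon_{\mathcal A}\ge0$. Both are true, since gradients are nonnegative and $\mathcal C^+$ dominates $\mathcal A$ under $z$. If $v^\star-V_t<0$, the bound still holds because $g_t\ge0$.

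\textbf{Step 3 (recurrence).} Combining Steps 1 and 2 gives
\[
v^\star-V_{t+1}\le(1-s_t\tau_t)(v^\star-V_t)+\mathcal R_t(s_t).
\]
Since $s_t\le1$ and $\tau_t\le1$, each factor $1-s_t\tau_t$ is nonnegative. Multiplying by it therefore preserves inequalities, and induction from $V_0=\widetilde v(\boldsymbol 0)=0$ (normalization) gives
\[
v^\star-V_T\le\prod_{t=0}^{T-1}(1-s_t\tau_t)\,v^\star+\sum_{t=0}^{T-1}\mathcal R_t(s_t)\prod_{j=t+1}^{T-1}(1-s_j\tau_j).
\]
Rearranging yields \eqref{eq:finite}.

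The condition $\sum_ts_t=1$ is not needed for the inequality itself. It is needed only so that $\boldsymbol x_T$ lies in the base polytope and the product can be compared with $e^{-\sum s_t\tau_t}$ later. Feasibility is preserved because $\boldsymbol x_T=\sum_ts_t\boldsymbol 1_{\mathcal C_t}$ is a convex combination of bases.

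The main obstacle is Step 2, specifically justifying the gradient inequality for the multilinear extension at a fractional point. I would cite the standard argument of \cite{calinescu2011maximizing}: $\widetilde v$ is concave along nonnegative directions and monotone because $v$ is monotone submodular. I would also check carefully that $\tau_t\in[0,1]$, since the whole recurrence depends on this sign.
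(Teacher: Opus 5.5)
Your proof is correct and matches the paper's argument: monotonicity and concavity along $\boldsymbol x_t\vee\boldsymbol 1_{\mathcal C^\star}-\boldsymbol x_t$ give $D_t\ge\tau_t[v(\mathcal C^\star)-V_t]$, and the exact residual identity is then unrolled. You also usefully make explicit $\tau_t\in[0,1]$, $1-s_t\tau_t\ge0$ and $V_0=0$, which the paper leaves implicit. One correction to your closing remark: you do use $\sum_t s_t\le1$ with $s_t\ge0$ (equality is what you can drop), since it keeps every $\boldsymbol x_t$ in $[0,1]^{|\cE|}$ and gives the $s_t\le1$ you invoke in Step~3, and the nonnegative gradient and DR-concavity hold only on that cube.
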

\begin{proof}
Let $\boldsymbol g_t=\nabla\widetilde v(\boldsymbol x_t)$ and write $g_t(\mathcal C)=\boldsymbol g_t^\top\boldsymbol 1_{\mathcal C}$. Monotonicity gives
$\widetilde v(\boldsymbol x_t\vee\boldsymbol 1_{\mathcal C^\star})\ge\widetilde v(\boldsymbol 1_{\mathcal C^\star})=v(\mathcal C^\star)$. Since $\widetilde v$ is DR-submodular, concavity along nonnegative directions yields
\[
\widetilde v(\boldsymbol x_t\vee\boldsymbol 1_{\mathcal C^\star})-V_t
\le \boldsymbol g_t^\top[(\boldsymbol x_t\vee\boldsymbol 1_{\mathcal C^\star})-\boldsymbol x_t]
\le g_t(\mathcal C^\star)\le M_t,
\]
where the second inequality uses $\boldsymbol g_t\ge\boldsymbol 0$ coordinatewise. Therefore
$M_t\ge v(\mathcal C^\star)-V_t$, and the certified direction satisfies
$D_t\ge\tau_t[v(\mathcal C^\star)-V_t]$. Combining this with the exact residual identity
$V_{t+1}=V_t+s_tD_t-\mathcal R_t(s_t)$ gives
\[
V_{t+1}\ge(1-s_t\tau_t)V_t+s_t\tau_t v(\mathcal C^\star)-\mathcal R_t(s_t).
\]
Unrolling the scalar recurrence yields \eqref{eq:finite}.
\end{proof}

\begin{lemma}[Residual-absorbed product bound]\label{lem:controlled}
Under the conditions of Lemma~\ref{lem:finite-recurrence}, let $r_t=\mathcal R_t(s_t)/(s_tD_t)$, with $r_t=0$ if $s_tD_t=0$. Then $0\le r_t\le1$ and
\begin{equation}
 \widetilde v(\boldsymbol x_T)\ge
 \left[1-\prod_t\{1-(1-r_t)s_t\tau_t\}\right]v(\mathcal C^\star). \label{eq:controlledreal}
\end{equation}
If, in addition, every accepted step satisfies $\mathcal R_t(s_t)\le\eta s_tD_t$, then
\begin{equation}
 \widetilde v(\boldsymbol x_T)\ge
 \left[1-\prod_t\{1-(1-\eta)s_t\tau_t\}\right]v(\mathcal C^\star). \label{eq:controlledeta}
\end{equation}
\end{lemma}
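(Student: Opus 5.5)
I will redo the scalar recurrence from the proof of Lemma~\ref{lem:finite-recurrence}, this time absorbing the residual into the step factor rather than leaving it as an additive loss. The first task is to show $0\le r_t\le 1$.

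Nonnegativity, $\mathcal R_t(s_t)\ge0$, follows from concavity of $\widetilde v$ along the nonnegative direction $\boldsymbol h_t$, as already noted after \eqref{eq:residual}. For the upper bound, I use monotonicity: $\widetilde v$ is nondecreasing in $\boldsymbol x$ because every coordinate gradient $g_i^{(r)}\ge0$ by Lemma~\ref{lem:grad}. Hence $\widetilde v(\boldsymbol x_t+s_t\boldsymbol h_t)\ge V_t$, which gives $\mathcal R_t(s_t)\le s_tD_t$. When $s_tD_t=0$, the convention $r_t=0$ applies. In that case I also need $\mathcal R_t(s_t)=0$: if $s_t=0$ it is trivial, and if $D_t=0$ then concavity and monotonicity together force $\mathcal R_t(s_t)=0$. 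Either way $0\le r_t\le1$.

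Next, I rewrite the exact residual identity as $V_{t+1}=V_t+(1-r_t)s_tD_t$. Combining this with $D_t\ge\tau_tM_t\ge\tau_t[v(\mathcal C^\star)-V_t]$ (the chain established in the proof of Lemma~\ref{lem:finite-recurrence}) and the fact $(1-r_t)s_t\ge0$ gives
\[
v(\mathcal C^\star)-V_{t+1}\le\{1-(1-r_t)s_t\tau_t\}\,[v(\mathcal C^\star)-V_t].
\]
One point needs care here: multiplying by $(1-r_t)s_t$ preserves the inequality only if that factor is nonnegative, which is exactly what the first step provides. I also need each factor $1-(1-r_t)s_t\tau_t$ to be nonnegative, so that the inequality can be chained. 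This holds because $\tau_t\le1$ by \eqref{eq:tau} (when $g(\mathcal A)\ge0$, which is automatic since the gradients are nonnegative) and because $s_t\le1$.

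I then iterate from $t=0$. Since $\boldsymbol x_0=\boldsymbol 0$, the starting value is $V_0=\widetilde v(\boldsymbol 0)=v(\emptyset)=0$. Unrolling gives
\[
v(\mathcal C^\star)-V_T\le\prod_t\{1-(1-r_t)s_t\tau_t\}\,v(\mathcal C^\star),
\]
which is \eqref{eq:controlledreal}.

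For \eqref{eq:controlledeta}, the condition $\mathcal R_t(s_t)\le\eta s_tD_t$ gives $r_t\le\eta$ whenever $s_tD_t>0$, and trivially otherwise. Each factor is therefore at most $1-(1-\eta)s_t\tau_t$, which is still nonnegative. Since $v(\mathcal C^\star)\ge0$, the product bound transfers directly.

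The main obstacle I expect is the sign bookkeeping: confirming that $(1-r_t)s_t\tau_t\in[0,1]$, so that the products are monotone in each factor and the inequalities can be chained.
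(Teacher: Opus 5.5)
Your proposal is correct and follows the paper's proof step for step. You get $0\le r_t\le1$ from residual nonnegativity and monotonicity, rewrite $V_{t+1}-V_t=(1-r_t)s_tD_t$, combine this with the LMO bound, unroll the gap contraction from $V_0=0$, and then use $r_t\le\eta$. Your explicit checks that $\mathcal R_t(s_t)=0$ when $s_tD_t=0$ and that each factor lies in $[0,1]$ fill in details the paper leaves implicit. One small correction: $\tau_t\le1$ comes from $\epsilon_{\mathcal A}\ge0$, which holds because $\mathcal A$ is itself a competitor in the maximization defining $\mathcal C^+(\mathcal A)$. By contrast, $g(\mathcal A)\ge0$ gives $\tau_t\ge0$, which you also need when multiplying $M_t\ge v(\mathcal C^\star)-V_t$ by $\tau_t$.
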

\begin{proof}
Monotonicity along $\boldsymbol h_t\ge0$ and $\mathcal R_t(s_t)\ge0$ give $0\le r_t\le1$. Equation \eqref{eq:residual} then gives
$V_{t+1}-V_t=(1-r_t)s_tD_t$. Because $0\le r_t\le1$, the LMO bound preserves the inequality direction and yields
$V_{t+1}-V_t\ge(1-r_t)s_t\tau_t[v(\mathcal C^\star)-V_t]$.
Unrolling the multiplicative gap contraction gives \eqref{eq:controlledreal}. Under the stated residual-control condition, $r_t\le\eta$ for every accepted step, which gives \eqref{eq:controlledeta}.
\end{proof}
The second bound is available a priori from the prescribed residual threshold, whereas the first uses the tighter realized residual ratios. We therefore use the second form to derive the uniform finite-step guarantee below.

\begin{theorem}[Finite-step approximation guarantee]
\label{thm:finite}
Suppose that every accepted direction satisfies $\tau_t\ge\tau_0$ and every adaptive step satisfies
$\mathcal R_t(s_t)\le\eta s_tD_t$, with $\sum_t s_t=1$. Then
\begin{equation*}
\widetilde v(\boldsymbol x_T)
\ge
\left[1-e^{-(1-\eta)\tau_0}\right]
v(\mathcal C^\star).
\end{equation*}
\end{theorem}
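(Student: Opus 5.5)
The plan is to start from the a priori bound \eqref{eq:controlledeta} of Lemma~\ref{lem:controlled} and bound its product factor. The hypothesis $\tau_t\ge\tau_0$ supplies the condition $D_t\ge\tau_tM_t$ needed in Lemma~\ref{lem:finite-recurrence}. This is because $\mathcal C_t$ is accepted only after certification, and Lemma~\ref{lem:tau} gives $g_t(\mathcal C_t)\ge\tau_{\mathcal C_t}M_t$, with $D_t=g_t(\mathcal C_t)$. Since $\sum_ts_t=1$ and every step satisfies $\mathcal R_t(s_t)\le\eta s_tD_t$, Lemma~\ref{lem:controlled} gives
\[
\widetilde v(\boldsymbol x_T)\ge\Bigl[1-\prod_{t=0}^{T-1}\{1-(1-\eta)s_t\tau_t\}\Bigr]v(\mathcal C^\star).
\]
It therefore suffices to show $\prod_t\{1-(1-\eta)s_t\tau_t\}\le e^{-(1-\eta)\tau_0}$.

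Each factor $c_t:=1-(1-\eta)s_t\tau_t$ lies in $[0,1]$. This holds because $s_t\in[0,1]$, $0\le\eta<1$, and $\tau_t\in[0,1]$. The last inclusion needs a short check. From \eqref{eq:tau}, $\tau_{\mathcal A}\le1$ whenever $g(\mathcal A)\ge0$ and $\epsilon_{\mathcal A}\ge0$. Nonnegativity of $g$ follows from $\boldsymbol g_t\ge\boldsymbol0$, and $\epsilon_{\mathcal A}\ge0$ holds because $\mathcal C^+(\mathcal A)$ is at least as good as $\mathcal A$ under $z(\mathcal A)$. Since the factors are nonnegative, I apply $1-w\le e^{-w}$ termwise to get
\[
\prod_t c_t\le\exp\Bigl(-(1-\eta)\sum_ts_t\tau_t\Bigr)\le\exp\Bigl(-(1-\eta)\tau_0\sum_ts_t\Bigr)=e^{-(1-\eta)\tau_0},
\]
using $\tau_t\ge\tau_0$ and $\sum_ts_t=1$. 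Because $v(\mathcal C^\star)\ge0$ (normalized and monotone), lowering the bracket preserves the inequality, and the theorem follows.

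The main obstacle is the bookkeeping around the sign conditions, not the exponential estimate. I need the factors $c_t$ to be nonnegative, so that the termwise exponential bound may be multiplied. I also need the degenerate case $g(\mathcal A)+\epsilon_{\mathcal A}=0$, where $\tau=1$, to remain consistent with $D_t\ge\tau_tM_t$. In that case $M_t\le g(\mathcal A)+\epsilon_{\mathcal A}=0$, so $M_t=0=D_t$ and the inequality holds trivially. Finally, I must handle steps where $s_tD_t=0$, which Lemma~\ref{lem:controlled} already covers by setting $r_t=0\le\eta$.
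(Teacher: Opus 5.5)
Your proposal is correct and follows the paper's proof: invoke \eqref{eq:controlledeta} from Lemma~\ref{lem:controlled}, apply $1-z\le e^{-z}$ factorwise, and use $\tau_t\ge\tau_0$ together with $\sum_ts_t=1$. Your extra check that each factor $1-(1-\eta)s_t\tau_t$ lies in $[0,1]$ (via $g\ge0$ and $\epsilon_{\mathcal A}\ge0$, hence $0\le\tau_t\le1$) is needed to multiply the termwise exponential bounds, and the paper leaves it implicit; note only that $D_t\ge\tau_tM_t$ comes from Lemma~\ref{lem:tau} itself, not from the hypothesis $\tau_t\ge\tau_0$.
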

\begin{proof}
From \eqref{eq:controlledeta} and $1-z\le e^{-z}$,
\[
 \prod_t\{1-(1-\eta)s_t\tau_t\}
 \le \exp\!\left(-(1-\eta)\sum_ts_t\tau_t\right)
 \le e^{-(1-\eta)\tau_0},
\]
where the last inequality uses $\sum_ts_t=1$.
\end{proof}
Theorem~\ref{thm:finite} separates the two controls: $\tau_0$ governs linear-oracle certification, whereas $\eta$ governs finite-step curvature loss. As $\tau_0\to1$ and $\eta\to0$, the approximation factor approaches the classical $1-1/e$ guarantee.

\subsection{Deterministic rounding and path-guided exchange}
The final fractional point is a convex combination of feasible bases,
\begin{equation*}
 \boldsymbol x_T=\sum_ts_t\boldsymbol 1_{\mathcal C_t},\qquad s_t\ge0,\quad \sum_ts_t=1.
\end{equation*}
This representation allows a deterministic sequence of feasible pipage exchanges to convert the fractional point into a discrete base without weakening the objective bound.

\begin{lemma}[Deterministic discrete preservation]\label{lem:round}
There is a deterministic sequence of feasible pairwise pipage endpoint moves producing a base $\mathcal C^0$ such that
\begin{equation}
 v(\mathcal C^0)\ge \widetilde v(\boldsymbol x_T). \label{eq:rounddet}
\end{equation}
\end{lemma}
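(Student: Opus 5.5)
The plan is the standard pipage-rounding argument of \cite{calinescu2011maximizing,chekuri2010dependent}, specialized to the truncated partition matroid $\cB_B$. First, $\boldsymbol x_T$ lies in the base polytope $P(\cB_B)$. It is a convex combination of the incidence vectors $\boldsymbol 1_{\mathcal C_t}$ with $\mathcal C_t\in\cB_B$. Concretely, it satisfies $\sum_e x_{T,e}=B$, $\sum_{i}x^{(r)}_{T,i}\le N_r-\kappa_r$ for every $r$, and $\boldsymbol 0\le\boldsymbol x_T\le\boldsymbol 1$. I will maintain the invariant that the current point $\boldsymbol x$ stays in this polytope, and that $\widetilde v(\boldsymbol x)$ never decreases.

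\textbf{Step 1: the move.} Suppose $\boldsymbol x$ is fractional. Since $B$ is an integer, there are at least two fractional coordinates $i\ne j$. If two fractional coordinates lie in the same mode, moving along $\boldsymbol e_i-\boldsymbol e_j$ changes neither the total $B$ nor any mode sum, so only the box constraints bind. Otherwise, no mode contains two fractional coordinates, so each fractional mode sum is fractional and its constraint is slack. In that case I pick $i,j$ in different modes; the total is preserved and both mode constraints remain satisfiable for small $|\lambda|$. In either case, let $\lambda_-<0<\lambda_+$ be the extreme values keeping $\boldsymbol x+\lambda(\boldsymbol e_i-\boldsymbol e_j)$ feasible. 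At each endpoint, either a coordinate becomes integral or a mode constraint becomes tight.

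\textbf{Step 2: convexity along the move.} The function $\lambda\mapsto\widetilde v(\boldsymbol x+\lambda(\boldsymbol e_i-\boldsymbol e_j))$ is convex on $[\lambda_-,\lambda_+]$. Because $\widetilde v$ is multilinear, its second derivative in this direction is $-2\,\partial_i\partial_j\widetilde v\ge 0$, which follows from submodularity of $v$. One can also verify this directly from \eqref{eq:multiv}. If $i,j$ share mode $r$, only $H_r$ varies, and its coefficient of $\lambda^2$ is $-((\boldsymbol u_i^{(r)})^\top\boldsymbol u_j^{(r)})^2\cdot(\text{positive})$, entering $\widetilde v$ with a minus sign. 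If $i,j$ lie in different modes, $H_r$ and $H_{s}$ are affine in $\lambda$ with slopes of opposite sign, so the product $\prod H$ has a nonpositive quadratic coefficient. Convexity gives $\max\{\widetilde v(\lambda_-),\widetilde v(\lambda_+)\}\ge\widetilde v(0)$, so moving to the better endpoint $\lambda^\star$ preserves the invariant.

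\textbf{Step 3: termination.} I expect this step to be the main obstacle, because a move may only tighten a mode constraint without creating a new integral coordinate. The potential I will use is the number of fractional coordinates plus the number of non-tight fractional mode constraints. Each endpoint move strictly decreases it: it either integralizes a coordinate or tightens a constraint. A tightened mode stays tight, since later moves in that mode pair two of its own coordinates or keep its sum at the bound. So there are at most $|\cE|+R$ moves. The choice of the pair $(i,j)$ and of the endpoint (ties broken by index) is deterministic. At termination, $\boldsymbol x$ is an integral point of $P(\cB_B)$, that is, $\boldsymbol 1_{\mathcal C^0}$ with $\mathcal C^0\in\cB_B$. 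Then $v(\mathcal C^0)=\widetilde v(\boldsymbol 1_{\mathcal C^0})\ge\widetilde v(\boldsymbol x_T)$, which is \eqref{eq:rounddet}.
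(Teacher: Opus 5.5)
Your proof is correct and takes the same approach as the paper: $\widetilde v$ is convex along each feasible exchange line $\boldsymbol x+\lambda(\boldsymbol e_i-\boldsymbol e_j)$ because its second derivative there is $-2\partial_{ij}\widetilde v\ge0$, so you move to the better endpoint and repeat until the point is integral; your feasibility bookkeeping and termination count supply details the paper leaves to the pipage literature. Your termination step can be simplified. In the cross-mode case, each mode involved has exactly one fractional coordinate $x_e$ and an integer bound, so its slack $N_r-\kappa_r-\sum_i x_i^{(r)}$ is at least $1-x_e$. Hence every move already makes some coordinate integral, and the number of fractional coordinates alone serves as the potential.
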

\begin{proof}
For a multilinear extension of a submodular function, the restriction to a feasible exchange line $\boldsymbol x+\lambda(\boldsymbol e_i-\boldsymbol e_j)$ is convex because its second derivative is $-2\partial_{ij}\widetilde v\ge0$. At each pairwise pipage step at least one feasible endpoint has objective no smaller than the current fractional point. Selecting the better endpoint preserves feasibility and does not decrease $\widetilde v$. Repeating until an integral point is reached proves \eqref{eq:rounddet}.
\end{proof}
Consequently, the lower bounds from
Lemmas~\ref{lem:finite-recurrence} and~\ref{lem:controlled} also hold
pointwise for $\mathcal C^0$.
Within SR-CG, PGX is the final discrete refinement stage following
deterministic rounding. It couples the rounded solution to the
preceding fractional continuous stage through the final fractional
state $\boldsymbol{x}_T$.
From
$\boldsymbol x_T=\sum_t s_t\boldsymbol 1_{\mathcal C_t}$, each
coordinate satisfies
$x_{T,e}=\sum_{t:e\in\mathcal C_t}s_t$ and therefore summarizes the
total path weight of the accepted SR-CG directions containing $e$.

For the current removal base $\mathcal C$, let
$\mathcal P(\mathcal C)$ contain the at most $2L$ elements of
$\mathcal C$ with smallest lexicographic scores
$(x_{T,e},\phi_e)$, and let
$\mathcal K(\mathcal C)=\mathcal E\setminus\mathcal C$ be the $L$
retained tagged elements, ordered by decreasing
$(x_{T,e},\phi_e)$. Here $x_{T,e}$ is the primary ordering, whereas
$\phi_e$ is only a deterministic secondary ordering. Among feasible
exchanges $\mathcal C-i+j$ with
$i\in\mathcal P(\mathcal C)$ and $j\in\mathcal K(\mathcal C)$, PGX
evaluates the exact retained FP and accepts the best strict
improvement. The candidate set is recomputed after each accepted swap,
and the process terminates when no improving candidate exchange
remains.

The distinguishing design choice of the PGX stage within SR-CG is this
state-to-frontier coupling: $\boldsymbol{x}_T$ restricts and prioritizes
the exchange search, while the exact FP---not $x_{T,e}$ or
$\phi_e$---determines whether a swap is accepted.
The factor $2L$ is
fixed for all experiments. It bounds the removal frontier independently
of $N_\Sigma$ and is not tuned by the data set. Since
$|\mathcal K(\mathcal C)|=L$, one scan contains at most $2L^2$
candidate pairs before feasibility filtering.

\begin{lemma}[PGX monotonicity]\label{lem:pgx}
Let $\mathcal C_{\rm out}$ be the base returned by PGX initialized at $\mathcal C^0$. Then
\begin{equation}
 F(\mathcal L(\mathcal C_{\rm out}))\le F(\mathcal L(\mathcal C^0)),\qquad
 v(\mathcal C_{\rm out})\ge v(\mathcal C^0)\ge\widetilde v(\boldsymbol x_T). \label{eq:pgxmono}
\end{equation}
\end{lemma}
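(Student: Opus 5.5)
The plan is an induction over the accepted PGX swaps, followed by translating the FP inequality into $v$ and chaining with Lemma~\ref{lem:round}.

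\emph{Invariant.} Let $\mathcal C^{(0)}=\mathcal C^0,\mathcal C^{(1)},\ldots$ be the successive bases held by PGX. The claim is that each $\mathcal C^{(k)}$ lies in $\cB_B$ and that
\[
F(\mathcal L(\mathcal C^{(k+1)}))<F(\mathcal L(\mathcal C^{(k)})).
\]
Feasibility of $\mathcal C^{(0)}$ is supplied by Lemma~\ref{lem:round}. Each later base has the form $\mathcal C-i^\star+j^\star$. The minimization in Algorithm~\ref{alg:srcg-full} ranges only over pairs with $\mathcal C-i+j\in\cB_B$, so each accepted base is feasible. A swap is accepted only if it strictly lowers $F(\mathcal L(\mathcal C))$, which gives the strict decrease. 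If no swap is accepted, $\mathcal C_{\rm out}=\mathcal C^0$ and the inequality holds with equality.

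\emph{Termination.} This is the only step that needs care, since without it $\mathcal C_{\rm out}$ would not be well defined. The set $\cB_B$ is finite, and $F(\mathcal L(\cdot))$ strictly decreases along the sequence. Hence no base can repeat, so the loop stops after at most $|\cB_B|$ accepted swaps. This does not depend on how $\mathcal P$ and $\mathcal K$ are recomputed or ordered, because the $(x_{T,e},\phi_e)$ orderings only restrict which candidates are examined. Chaining the strict inequalities gives $F(\mathcal L(\mathcal C_{\rm out}))\le F(\mathcal L(\mathcal C^0))$.

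\emph{Translation to $v$.} By \eqref{eq:v}, $v(\mathcal C)=1-F(\mathcal L(\mathcal C))/F(\mathcal N)$. Here $F(\mathcal N)=\prod_r Z_r>0$, since each $\mathbf U_r$ has full column rank and so $Z_r=\|\mathbf G_r^{\rm full}\|_F^2>0$. The map $F\mapsto v$ is therefore strictly decreasing, which gives $v(\mathcal C_{\rm out})\ge v(\mathcal C^0)$. Finally, Lemma~\ref{lem:round} gives $v(\mathcal C^0)\ge\widetilde v(\boldsymbol x_T)$, and chaining the two inequalities completes \eqref{eq:pgxmono}.
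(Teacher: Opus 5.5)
Your proposal is correct and follows the same route as the paper. Every accepted exchange is feasible and strictly lowers the exact retained FP, so $F$ decreases and $v=1-F/F(\mathcal N)$ increases; chaining with Lemma~\ref{lem:round} then gives \eqref{eq:pgxmono}, and your termination argument (strict decrease on the finite set $\cB_B$) and the check $F(\mathcal N)=\prod_r Z_r>0$ are sound details the paper leaves implicit.
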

\begin{proof}
Every accepted exchange is feasible and is accepted only after its exact retained FP is verified to be strictly smaller than the current FP. Thus FP decreases monotonically, equivalently $v$ increases. Combining this fact with Lemma~\ref{lem:round} gives \eqref{eq:pgxmono}.
\end{proof}
Hence every lower bound established for the deterministically rounded output remains valid for the final SR-CG output.
Combining Theorem~\ref{thm:finite} with Lemmas~\ref{lem:round} and~\ref{lem:pgx} shows that the same finite-step approximation guarantee holds for the final SR-CG output.

\begin{corollary}[Guarantee in the original FP scale]\label{cor:rawfp}
Let $\alpha=1-e^{-(1-\eta)\tau_0}$ and let $F^*$ denote the minimum feasible retained FP. Under the conditions of Theorem~\ref{thm:finite}, define the final retained design by $\mathcal L_{\rm out}=\mathcal L(\mathcal C_{\rm out})$. Then
\begin{equation}
 F(\mathcal L_{\rm out})\le (1-\alpha)F(\mathcal N)+\alpha F^*. \label{eq:rawfp}
\end{equation}
\end{corollary}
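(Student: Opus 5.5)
The plan is to translate the normalized guarantee into the raw FP scale through the definition \eqref{eq:v}, since $v$ is an affine function of $F$. Combining Theorem~\ref{thm:finite} with Lemmas~\ref{lem:round} and~\ref{lem:pgx} gives the chain
\[
v(\mathcal C_{\rm out})\ge v(\mathcal C^0)\ge\widetilde v(\boldsymbol x_T)\ge\alpha\,v(\mathcal C^\star),
\]
where $\mathcal C^\star$ is an optimal feasible base for maximizing $v$ over $\cB_B$.

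The next step identifies $v(\mathcal C^\star)$ with $F^*$. The map $\mathcal C\mapsto\mathcal L(\mathcal C)$ is a bijection between $\cB_B$ and the feasible retained designs of \eqref{eq:budget}: $|\mathcal C|=B=N_\Sigma-L$ corresponds to $\sum_r|\mathcal L_r|=L$, and $|\mathcal C_r|\le N_r-\kappa_r$ corresponds to $|\mathcal L_r|\ge\kappa_r$. Because $v(\mathcal C)=1-F(\mathcal L(\mathcal C))/F(\mathcal N)$ with $F(\mathcal N)=\prod_rZ_r>0$, $v$ is a strictly decreasing affine function of $F(\mathcal L(\mathcal C))$. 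Hence $v(\mathcal C^\star)=1-F^*/F(\mathcal N)$. Positivity of $F(\mathcal N)$ holds because each $\mathbf U_r$ has full column rank, so $Z_r\ge\|\mathbf G_r^{\rm full}\|_F^2>0$.

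Substituting into the chain gives
\[
1-\frac{F(\mathcal L_{\rm out})}{F(\mathcal N)}\ge\alpha\Bigl(1-\frac{F^*}{F(\mathcal N)}\Bigr).
\]
Multiplying by $F(\mathcal N)>0$ and rearranging yields $F(\mathcal L_{\rm out})\le(1-\alpha)F(\mathcal N)+\alpha F^*$, which is \eqref{eq:rawfp}.

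I expect no real obstacle here. The only points needing care are the exact correspondence between the feasible removal bases and the feasible retained designs, including the truncated-matroid upper bounds, and keeping the sign of the inequality correct when converting from $v$ back to $F$.
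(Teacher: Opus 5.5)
Your proposal is correct and follows the same route as the paper: you chain Theorem~\ref{thm:finite} with Lemmas~\ref{lem:round} and~\ref{lem:pgx} to get $1-F(\mathcal L_{\rm out})/F(\mathcal N)\ge\alpha[1-F^*/F(\mathcal N)]$ and then rearrange. Your explicit checks fill in details the paper leaves implicit, namely the bijection between $\cB_B$ and the feasible designs of \eqref{eq:budget} and the positivity of $F(\mathcal N)$ (in fact $Z_r=\|\mathbf G_r^{\rm full}\|_F^2$ holds with equality).
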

\begin{proof}
Theorem~\ref{thm:finite} together with Lemmas~\ref{lem:round} and~\ref{lem:pgx} gives
$1-F(\mathcal L_{\rm out})/F(\mathcal N)\ge\alpha[1-F^*/F(\mathcal N)]$. Rearranging yields \eqref{eq:rawfp}.
\end{proof}
Equivalently, SR-CG captures at least an $\alpha$ fraction of the optimal FP reduction from the full design. The finishing stage is deliberately separated from the certified gradient accounting: it adds no exact multilinear-gradient coordinates. A control experiment applies unrestricted exact one-exchange refinement to other initializations as well. This confirms that local exchange can benefit multiple starting points. Therefore, the experiments report both the deterministically rounded intermediate output and the complete SR-CG output rather than attributing all post-rounding gains to the continuous path.

\subsection{Complexity and implementation}
Computing and caching the full modal Grams and row statistics once costs
\begin{equation*}
 O\!\left(\sum_rN_rK_r^2\right)
\end{equation*}
and requires no $N_r\times N_r$ pairwise table. At adaptive step $t$, if $q_t$ coordinates are evaluated exactly and $J_t$ relative-base recertifications are required, the cost is
\begin{equation*}
 O\!\left(\sum_rK_r^3+(J_t+1)N_\Sigma+(q_t+L)(\max_rK_r)^2\right).
\end{equation*}
The residual search itself repeatedly evaluates only the explicit low-dimensional directional polynomial and so adds no exact-coordinate queries. Across the adaptive path, the total exact-coordinate work is $\sum_tq_t$. A full-gradient reference evaluates $N_\Sigma$ coordinates per step. The reported reference uses 36 such steps. The scan and recertification terms remain linear in $N_\Sigma$. PGX is a separate discrete cost: selecting its candidate set is $O(N_\Sigma\log L)$, and one pass tests at most $2L^2$ exchanges. With cached modal Grams, each test updates at most two $K_r\times K_r$ modal factors, giving $O(L^2(\max_rK_r)^2)$ arithmetic per pass. The number of PGX scans is at most one plus the number of accepted exchanges. The computational benefit lies in reducing exact gradient evaluations during the continuous stage, while PGX adds separate discrete FP swap evaluations. The latter improve the final discrete design but are not counted as continuous-stage gradient evaluations.

\subsection{Additional FP and reconstruction analysis}
\subsubsection{Energy-aware FP certificate}
Although Algorithm~\ref{alg:srcg-full} does not branch on an optimality certificate, an instance-level lower bound is useful for validating designs and remains valid for arbitrary row energies. Recall $d_i^{(r)}=\|\boldsymbol u_i^{(r)}\|_2^2$ from \eqref{eq:ybar}. Let $d_{r,(1)}\le\cdots\le d_{r,(N_r)}$ be the ordered mode-$r$ row energies, and set
\begin{equation*}
 s_r(\ell)=\sum_{j=1}^{\ell}d_{r,(j)}.
\end{equation*}
For a retained set $\mathcal L_r$ of cardinality $\ell_r$, let $\mathbf Q_r=\mathbf U_r(\mathcal L_r)^{\top}\mathbf U_r(\mathcal L_r)$. By Cauchy--Schwarz on the $K_r$ eigenvalues,
\begin{equation*}
 \operatorname{tr}(\mathbf Q_r^2)\ge\frac{\operatorname{tr}(\mathbf Q_r)^2}{K_r}.
\end{equation*}
Moreover $\operatorname{tr}(\mathbf Q_r)$ is the sum of the retained row energies and is at least $s_r(\ell_r)$. Therefore
\begin{equation}
 F_r(\mathcal L_r)\ge\frac{s_r(\ell_r)^2}{K_r}. \label{eq:modeLB}
\end{equation}

\begin{theorem}[Energy-aware frame lower bound]\label{thm:energy}
Let $F^*$ be the minimum tensor FP over all designs satisfying \eqref{eq:budget}. Then
\begin{equation}
 F^*\ge F_E:=\min_{\substack{\ell_r\ge\kappa_r\\\sum_r\ell_r=L}}
 \prod_{r=1}^R\frac{s_r(\ell_r)^2}{K_r}. \label{eq:FE}
\end{equation}
If $F_E>0$, every feasible design $\widehat{\mathcal L}$ has the valid certificate
\begin{equation}
 \frac{F(\widehat{\mathcal L})}{F^*}\le\frac{F(\widehat{\mathcal L})}{F_E}. \label{eq:Fcert}
\end{equation}
\end{theorem}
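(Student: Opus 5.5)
The plan is to take an optimal design, bound each of its factors from below by \eqref{eq:modeLB}, and then relax the specific row sets to their cardinalities. Let $\mathcal L^*=(\mathcal L_1^*,\ldots,\mathcal L_R^*)$ attain $F^*$. Such a minimizer exists because the feasible set in \eqref{eq:budget} is finite and, by the standing assumption $\sum_r\kappa_r\le L\le N_\Sigma$, nonempty. Set $\ell_r^*=|\mathcal L_r^*|$. Feasibility gives $\ell_r^*\ge\kappa_r$ and $\sum_r\ell_r^*=L$. Hence the integer vector $(\ell_1^*,\ldots,\ell_R^*)$ lies in the index set of the minimum in \eqref{eq:FE}, and that minimum ranges over a finite, nonempty set.

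Next I would apply \eqref{eq:modeLB} to each mode of $\mathcal L^*$, which gives $F_r(\mathcal L_r^*)\ge s_r(\ell_r^*)^2/K_r$. I would briefly justify the two ingredients of \eqref{eq:modeLB}. First, $\operatorname{tr}(\mathbf Q_r^2)=\sum_k\lambda_k^2\ge(\sum_k\lambda_k)^2/K_r$ by Cauchy--Schwarz over the $K_r$ eigenvalues of the PSD matrix $\mathbf Q_r$. Second, $\operatorname{tr}(\mathbf Q_r)=\sum_{i\in\mathcal L_r^*}d_i^{(r)}$, and any $\ell_r^*$ row energies sum to at least $s_r(\ell_r^*)$, the sum of the $\ell_r^*$ smallest. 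Since $s_r(\ell)\ge0$, squaring preserves this inequality.

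Every factor is nonnegative, so multiplying the $R$ mode-wise inequalities is legitimate and uses the factorization $F(\mathcal L)=\prod_rF_r(\mathcal L_r)$. This yields
\[
F^*=\prod_rF_r(\mathcal L_r^*)\ge\prod_r\frac{s_r(\ell_r^*)^2}{K_r}\ge F_E ,
\]
where the last step holds because $(\ell_r^*)$ is one feasible index in the minimization defining $F_E$. This proves \eqref{eq:FE}.

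For \eqref{eq:Fcert}, assume $F_E>0$. Then $F^*\ge F_E>0$. Every feasible $\widehat{\mathcal L}$ has $F(\widehat{\mathcal L})\ge0$, so dividing by the larger positive denominator $F^*$ gives the smaller ratio $F(\widehat{\mathcal L})/F^*\le F(\widehat{\mathcal L})/F_E$.

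The only step needing care is the nonnegativity bookkeeping: we need $s_r(\ell)\ge0$ so that squaring is monotone, and nonnegative factors so that the product inequality holds. Both follow because row energies are squared norms. The argument is otherwise routine. The cardinality relaxation is what makes $F_E$ a minimum over allocations alone, independent of which rows are chosen.
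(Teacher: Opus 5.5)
Your proposal is correct and follows the paper's proof: bound a feasible design (you take a minimizer, the paper takes an arbitrary one) mode by mode via \eqref{eq:modeLB}, multiply using the Kronecker factorization of FP, and compare with the minimum over feasible cardinality profiles, after which the certificate follows from $F^*\ge F_E>0$. The only difference is that you state explicitly the nonnegativity and finiteness points the paper leaves implicit.
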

\begin{proof}
For any feasible modal cardinalities $(\ell_1,\ldots,\ell_R)$, \eqref{eq:modeLB} and the Kronecker FP factorization give
$F(\mathcal L)\ge\prod_r s_r(\ell_r)^2/K_r$. Minimizing this lower bound over all feasible modal cardinalities yields $F^*\ge F_E$. If $F_E>0$, then $F^*\ge F_E$ implies \eqref{eq:Fcert}.
\end{proof}
The minimization in \eqref{eq:FE} is a small dynamic program over mode cardinalities. For unit-norm rows, $s_r(\ell)=\ell$ and the bound reduces to the familiar Welch-type form $\ell_r^2/K_r$ mode by mode.

\subsubsection{Reconstruction interpretation and rank conditions}
FP is a spectral concentration measure, not the least-squares risk itself. For a retained modal Gram $\mathbf Q_r\succ0$, the tensor least-squares covariance obeys
\begin{equation*}
 \operatorname{tr}\!\left[\left(\bigotimes_r\mathbf Q_r\right)^{-1}\right]
 =\prod_r\operatorname{tr}(\mathbf Q_r^{-1}).
\end{equation*}
A small FP encourages balanced spectra but does not, without a lower spectral bound, imply a universal multiplicative MSE ratio. For example, a spectrum $(1,\epsilon)$ can have moderate squared eigenvalue sum while its inverse trace diverges as $\epsilon\to0$. We therefore keep FP guarantees and reconstruction measurements separate.

The cardinality condition $|\mathcal L_r|\ge K_r$ is necessary but not sufficient for $\mathbf U_r(\mathcal L_r)$ to have full column rank. All reported reconstruction designs are explicitly rank-checked. The handwritten-digit protocol uses training data only to estimate the spatial factors. The held-out test images do not affect the sampling design.

\section{Experiments}
We evaluate SR-CG on generic synthetic tensors, a controlled cross-mode
allocation stress family, and handwritten-digit reconstruction.
For stage-wise evaluation, Rounded SR-CG denotes the intermediate
output after deterministic rounding and before PGX, whereas SR-CG
denotes the complete algorithm including the PGX stage.
Fixed-step continuous greedy (Fixed-step CG)
uses a fixed step size. Exact-coordinate evaluations count exact
multilinear-gradient evaluations in the continuous stage. PGX's discrete FP
swap evaluations are reported separately. MSE-greedy is used only as an exact
least-squares reference in the synthetic reconstruction comparison. We use
least-squares (LS) MSE in the synthetic stress test and normalized mean-squared error (NMSE) for handwritten-digit
reconstruction.

\subsection{Experimental protocol and validation}
Greedy-FP uses the current exact feasible FP marginal at every removal, and FFW uses its published midpoint multilinear-FP scores and mode-wise normalization. Fixed-step CG uses $T=36$ uniform steps, and its fractional path is evaluated with eight randomized swap-rounding repetitions, matching the implementation. The internal MSE-greedy reference uses exact one-row-at-a-time least-squares MSE updates with rank-one inverse downdates and is not treated as a generic FP baseline.

For Greedy-FP and FFW, the original methods use the rank-based lower quota $|\mathcal L_r|\ge K_r$. When an experiment imposes a stricter common quota $\kappa_r>K_r$, we change only this feasibility quota to $|\mathcal L_r|\ge\kappa_r$ for all methods. Their scoring rules are unchanged. The final modal cardinalities remain free for all methods. The common quota changes only the feasible set and does not prescribe a fixed allocation. All synthetic comparisons use paired seeds. SR-CG uses $\tau_0=0.9999$ and $\eta=0.25$ unless explicitly varied. After deterministic pipage-style rounding, PGX uses the same fixed frontier factor $2L$ in every experiment. No data-dependent PGX parameter is tuned. Exact-gradient-coordinate counts refer to the continuous stage. PGX's exact discrete FP swap evaluations are reported separately.

All reported reconstruction designs are checked for full column rank before least-squares reconstruction. Random seeds and raw per-trial results are exported by the experiment code. No parameter is retuned separately for the generic, stress, or handwritten-digit experiments except when the parameter itself is the object of an ablation.

The exact audit enumerates the optimum on 120 small instances and finds zero violations of the FP/Shapley identities, spectral intervals, multiplicative LMO certificate, residual nonnegativity, adaptive residual rule, exact variable-step recurrence, residual-absorbed product certificate, or deterministic rounding. The minimum observed slack is $0.3682$ for Lemma~\ref{lem:finite-recurrence} and $0.3068$ for Lemma~\ref{lem:controlled}. The minimum rounding gain is zero. The implementation validation additionally finds no discrepancy for Greedy-FP, the published tensor FFW rule, the truncated-partition-matroid linear oracle, or MSE-greedy, and no PGX FP increase or feasibility violation in the tested cases.

\subsection{Generic tensors}
The generic benchmark contains 16 paired three-mode instances. The candidate dimensions are $(N_1,N_2,N_3)=(80,90,100)$, the latent dimensions are $(K_1,K_2,K_3)=(6,7,8)$, the minimum retained counts are $(10,11,12)$, and the total retained budget is $L=42$.

Each modal factor is generated from clustered unit-norm rows. For mode $r$, random unit cluster centers are first generated. Each row then combines its assigned center with an independent random unit direction,
\[
\boldsymbol z_i^{(r)}
=
\sqrt{\rho_r}\,\boldsymbol c_i^{(r)}
+
\sqrt{1-\rho_r}\,\boldsymbol\xi_i^{(r)}
+
10^{-4}\boldsymbol\nu_i^{(r)},
\]
and is then normalized to unit Euclidean norm. The three modes use $\rho_r=(0.81,0.56,0.31)$, producing different levels of directional redundancy across modes. The small Gaussian perturbation avoids degenerate repeated rows.

Relative to Fixed-step CG, Rounded SR-CG reduces mean FP/Greedy-FP from $1.388$ to $1.141$, the mean number of continuous-greedy steps from $36$ to $16$, and exact-coordinate evaluations from $4375$ to $2607$. The lower query count is accompanied by better FP after rounding. Spectral intervals avoid exact evaluation once the relevant coordinate ordering is certified, while residual control avoids rebuilding a direction when the exact directional polynomial shows that it remains accurate over a longer step. The two mechanisms remove different sources of repeated continuous-stage work.

PGX does not add multilinear-gradient queries. Starting from Rounded SR-CG, it lowers the paired mean FP/Greedy-FP ratio to $0.985$ (median $0.983$), and the complete SR-CG algorithm beats Greedy-FP on $14/16$ instances. The paired bootstrap 95\% interval for the mean ratio is $[0.975,0.996]$. PGX accepts $10.9$ exchanges on average and performs about $4.19\times10^4$ exact candidate-swap tests. These are separate discrete FP swap evaluations rather than continuous-stage gradient evaluations. Table~\ref{tab:upgrade} summarizes the generic-tensor comparison, and Fig.~\ref{fig:pgx}(a) shows the paired stage-wise FP changes. Thus the computational benefit lies in reducing exact gradient evaluations during the continuous stage, while PGX adds a separate discrete refinement cost.

\begin{table}[t]
\centering
\caption{Generic-tensor comparison over 16 paired instances. Continuous-stage step and exact-coordinate counts exclude PGX swap tests.}
\label{tab:upgrade}
\begin{tabular}{lrrr}
\toprule
Method & FP/Greedy & steps & exact coordinates\\
\midrule
Greedy-FP & 1.000 & -- & --\\
Fixed-step CG & 1.388 & 36 & 4375\\
Rounded SR-CG & 1.141 & \textbf{16} & \textbf{2607}\\
\textbf{SR-CG} & \textbf{0.985} & \textbf{16} & \textbf{2607}\\
\bottomrule
\end{tabular}
\end{table}

\begin{figure}[t]
\centering
\includegraphics[width=0.98\linewidth]{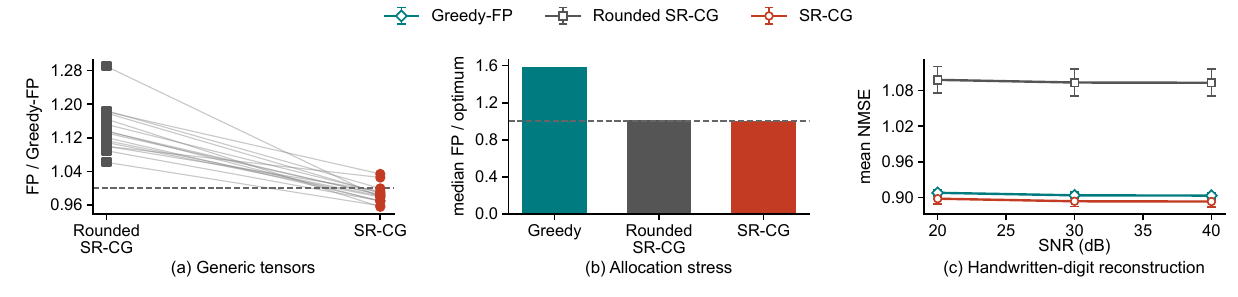}
\caption{Stage-wise and downstream comparisons. (a) Paired generic FP ratios for Rounded SR-CG and SR-CG. (b) FP relative to the enumerated optimum on the allocation-stress family. (c) Held-out handwritten-digit NMSE across signal-to-noise ratio. Error bars are standard errors over 16 trials.}
\label{fig:pgx}
\end{figure}

\subsection{Efficiency, ablation, and scaling}
\subsubsection{Residual-threshold ablation}
We vary only the residual threshold $\eta\in\{0.10,0.15,0.20,0.25,0.30\}$ while keeping $\tau_0=0.9999$ fixed. The ablation uses eight paired generic instances and reports Rounded SR-CG so that the effect of residual control is not mixed with PGX.

The corresponding mean FP/Greedy-FP values for Rounded SR-CG are $1.112,\allowbreak 1.110,\allowbreak 1.114,\allowbreak 1.119,$ and $1.131$. Over the same range, the mean number of adaptive steps decreases from $45.4$ to $13.0$ and exact-coordinate evaluations from $7545$ to $2120$. We use $\eta=0.25$ because moving from $0.20$ to $0.25$ changes FP/Greedy-FP by only $0.005$, while reducing the mean step count from $21$ to $16$ and exact-coordinate evaluations from $3447$ to $2596$. Fig.~\ref{fig:eta} summarizes this quality--computation trade-off. A fine uniform discretization repeatedly rebuilds directions that remain accurate. The exact directional polynomial identifies these regions and allows longer steps.

\begin{figure}[t]
\centering
\includegraphics[width=0.98\linewidth]{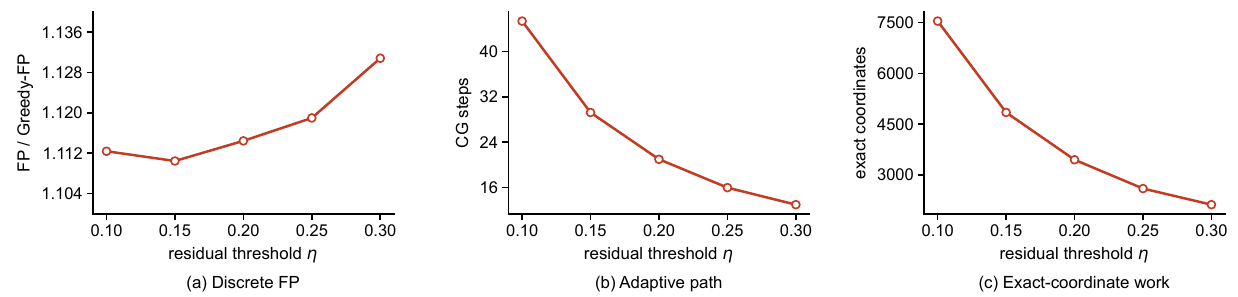}
\caption{Residual-threshold ablation for Rounded SR-CG over eight paired generic instances. Points report means.}
\label{fig:eta}
\end{figure}

\subsubsection{Scaling and normalized query savings}
For scaling, we use four paired seeds at $N_\Sigma\in\{300,600,1200,2400,3000,3600\}$ and report Rounded SR-CG. To compare an adaptive path with the 36-step fixed reference, we use
\begin{equation}
 1-\frac{\sum_tq_t}{36N_\Sigma}. \label{eq:fixednormsaving}
\end{equation}
The resulting exact-coordinate savings remain $62.6$--$72.3\%$ across the tested sizes. Table~\ref{tab:scalingnew} reports the numerical results, while Fig.~\ref{fig:diag} shows the corresponding query reduction, adaptive path length, and quality--query trade-off. The adaptive path length approaches the 36-step reference as $N_\Sigma$ increases, while the discrete FP gap is larger on some of the largest cases. The query reduction remains substantial over the tested sizes, while the FP gap varies with problem size.

\begin{table}[t]
\centering
\caption{Scaling of Rounded SR-CG over four paired seeds.}
\label{tab:scalingnew}
\begin{tabular}{rrrr}
\toprule
$N_\Sigma$ & steps & saved by \eqref{eq:fixednormsaving} & FP/Greedy\\
\midrule
300  & 17.0 & 72.3\% & 1.207\\
600  & 22.3 & 68.2\% & 1.175\\
1200 & 27.8 & 65.6\% & 1.340\\
2400 & 33.0 & 64.6\% & 1.423\\
3000 & 34.8 & 65.0\% & 1.342\\
3600 & 35.5 & 62.6\% & 1.489\\
\bottomrule
\end{tabular}
\end{table}

\begin{figure}[t]
\centering
\includegraphics[width=0.98\linewidth]{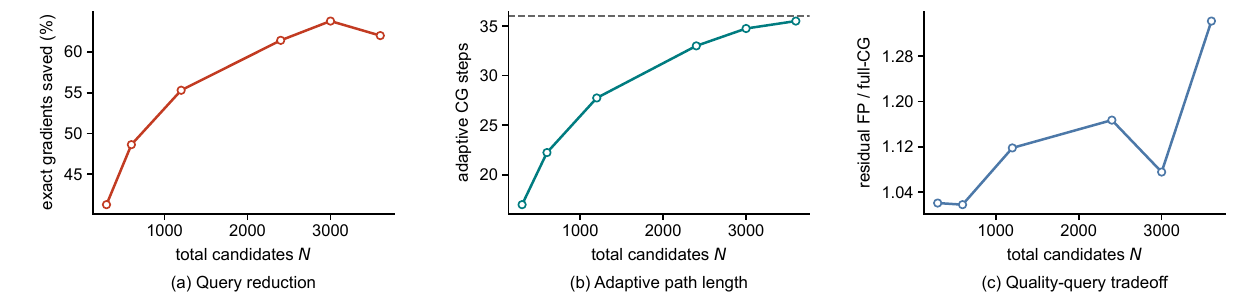}
\caption{Scaling diagnostics for Rounded SR-CG: exact-coordinate saving relative to the 36-step reference, adaptive path length, and fractional FP relative to full-gradient continuous greedy.}
\label{fig:diag}
\end{figure}

\subsection{Cross-mode allocation stress test}
The stress experiment uses a two-mode rank-two family designed to expose premature cross-mode commitment. The retained lower quotas are fixed, but the final modal cardinalities are not. Each method must decide how to distribute the remaining shared budget across the two modes. Each factor row is a two-dimensional unit vector parameterized by an angle $\theta_i^{(r)}$. For each trial,
\[
\widetilde\theta_i^{(r)}
=\theta_i^{(r)}+\varepsilon_i^{(r)},
\qquad
\varepsilon_i^{(r)}\sim\mathcal N(0,\sigma^2),
\]
independently across rows, and the perturbed row is $[\cos\widetilde\theta_i^{(r)},\sin\widetilde\theta_i^{(r)}]$. Hence $\sigma$ is the angular-perturbation standard deviation in radians and all rows remain unit norm. The retained lower quotas are $(2,2)$ and the total retained budget is $L=8$.

The targeted 20-instance stress experiment uses $\sigma=0.02$. On these 20 instances, Greedy-FP has median FP/optimum $1.588$, Rounded SR-CG has median $1.013$, and SR-CG reaches $1.000$. Rounded SR-CG recovers the optimal cross-mode allocation in $12/20$ instances, whereas SR-CG reaches the optimal allocation in all 20. The gap reflects the allocation mechanism rather than the ability to allocate across modes. Greedy-FP forms the modal cardinalities sequentially from current discrete marginals, whereas SR-CG can distribute fractional mass across modes before rounding.
PGX can then reuse the final fractional state to prioritize a compact
removal frontier and correct a small number of discrete decisions
through exact FP-decreasing exchanges.

We next vary $\sigma\in\{0.01,0.02,0.03,0.05,0.08,0.12\}$, using 24 paired instances at each level. SR-CG maintains median FP/optimum $1.000$ throughout the sweep. The Greedy-FP median decreases from $1.586$ at $\sigma=0.01$ to $1.423$ at $\sigma=0.12$. SR-CG's win rates against Greedy-FP are $100,\allowbreak 100,\allowbreak 100,\allowbreak 95.8,\allowbreak 95.8,$ and $83.3\%$, while exact allocation recovery decreases from $100\%$ to $54.2\%$. The full perturbation sweep is shown in Fig.~\ref{fig:stressneigh}, and the stage-wise comparison at the representative setting is included in Fig.~\ref{fig:pgx}(b).

As $\sigma$ increases, several allocations become nearly equivalent in FP, so exact recovery of one enumerated allocation becomes less informative. SR-CG nevertheless keeps the median FP near the optimum, showing that the advantage is not tied to a single hand-crafted allocation. The fractional path can delay a cross-mode decision and select among these near-equivalent designs after more global information has accumulated. LS MSE follows the same ordering, with a smaller separation because FP is an indirect surrogate for reconstruction loss.

\begin{figure}[t]
\centering
\includegraphics[width=0.98\linewidth]{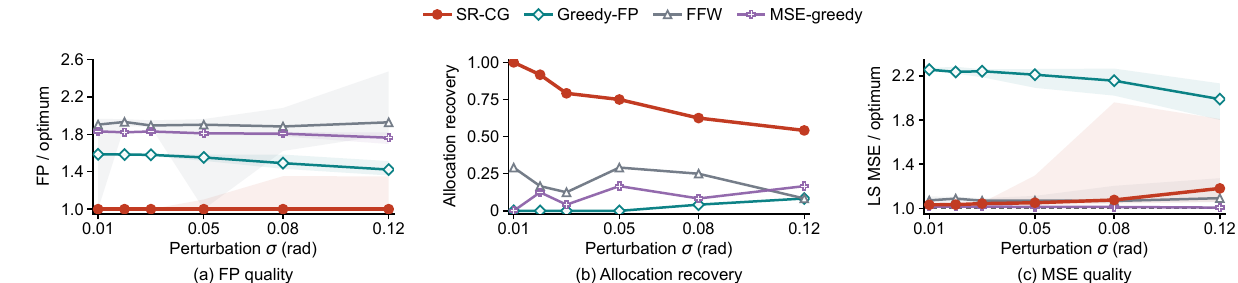}
\caption{Cross-mode allocation stress test over 24 paired instances per angular-perturbation level $\sigma$. Curves report medians and shaded regions show interquartile ranges.}
\label{fig:stressneigh}
\end{figure}

\subsection{Handwritten-digit reconstruction}
We use the 1,797-image \texttt{load\_digits} data set distributed with scikit-learn, which is a copy of the test portion of the UCI Optical Recognition of Handwritten Digits data set \cite{alpaydin1998digits}. Each sample is an $8\times8$ grayscale image with integer pixel values in $[0,16]$, which we rescale to $[0,1]$. For each of 16 trials, a random 80/20 train/test split is formed. Only the training images are used to construct the sampling design. After subtracting the training mean image, row and column covariance matrices are formed, and their leading three eigenvectors define rank-3 spatial factors. The factor rows are then normalized. We use $\kappa_1=\kappa_2=4$ and $L=9$, so every feasible design retains either $4\times5$ or $5\times4$, i.e., 20 sampled pixels.

Held-out images are reconstructed by least squares at signal-to-noise ratio (SNR) levels of $20$, $30$, and $40$ dB. Before solving the least-squares problem, the sampled modal factors and the Kronecker sensing matrix are checked for full column rank. For a test image $\mathbf X$, training mean $\bar{\mathbf X}$, and reconstruction $\widehat{\mathbf X}$, we report
\[
\operatorname{NMSE}
=
\frac{\|\widehat{\mathbf X}-\mathbf X\|_F^2}
{\|\mathbf X-\bar{\mathbf X}\|_F^2}.
\]

At 30 dB, Greedy-FP and SR-CG obtain mean NMSE values of $0.904$ and $0.894$, respectively, while Rounded SR-CG obtains $1.094$. At 20/40 dB, Greedy-FP obtains $0.908/0.903$ and SR-CG obtains $0.898/0.893$. Thus SR-CG has lower mean NMSE than Greedy-FP at all three tested SNRs. It wins $10/16$ paired trials at each SNR, and the paired bootstrap interval for the mean difference includes zero. Fig.~\ref{fig:pgx}(c) shows the mean NMSE across the three tested SNRs. The experiment shows a consistent mean improvement, but the paired evidence does not establish a statistically significant advantage. This distinction is expected because FP controls the geometry of the sensing design, whereas reconstruction also depends on the signal realization, noise, and conditioning of the sampled inverse problem.

As a control, we also apply unrestricted exact FP-decreasing one-exchange refinement to other initializations. At 30 dB, FFW changes from mean NMSE $1.421$ to $0.873$, whereas Greedy-FP changes from $0.904$ to $0.937$. Exact FP-decreasing refinement can therefore affect NMSE differently depending on the initialization.
Within SR-CG, we treat PGX as an $x_T$-guided discrete FP refinement
stage and report reconstruction separately from the formal FP
guarantee.

Overall, the experiments separate continuous-stage computational savings, discrete FP refinement, and downstream reconstruction. SR-CG reduces exact gradient evaluations during the continuous stage, while PGX uses separate discrete FP swap evaluations to improve the final design. The fractional trajectory retains information about alternative cross-mode allocations before the modal cardinalities become discrete. On handwritten-digit reconstruction, SR-CG achieves lower mean NMSE than Greedy-FP at every tested SNR, although the paired bootstrap analysis does not establish a statistically significant advantage. The reconstruction study therefore provides downstream evidence for the resulting sampling designs without implying a per-instance reconstruction guarantee.

\section{Conclusion}
We study FP-based tensor sampling under a shared budget whose modal
cardinalities are optimized jointly rather than fixed in advance.
\alg reduces unnecessary exact-gradient evaluations while retaining a
state-dependent fractional allocation until rounding. Within this
design, Shapley values serve only as query priorities, so the accepted
directions remain driven by current-state gradient information. The
finite-step guarantee approaches the classical $1-1/e$ factor as
direction certification becomes exact and finite-step residual error
vanishes. Experiments show lower continuous-stage gradient work and
better FP designs, with clearer gains on instances where the cross-mode
budget allocation is difficult to determine. On handwritten-digit
reconstruction, \alg attains lower mean NMSE than Greedy-FP at all
tested noise levels, although the paired difference is not statistically
significant.
The computational savings concern exact gradient evaluations in the
continuous stage. Within SR-CG, PGX reuses the final fractional state
to restrict the post-rounding FP exchange search and incurs a separate
discrete-swap cost.

\section*{Declarations}
\textbf{Funding.} The work was supported by the Major Scientific and Technological Innovation Platform Project of Hunan Province (2024JC1003). \textbf{Conflict of interest.} The authors declare no relevant financial or nonfinancial conflicts of interest. \textbf{Ethical compliance.} This work reports computational experiments on synthetic tensors and a publicly available machine-learning benchmark. No new human or animal subjects were recruited or studied, and no ethical approval was required for this computational study.

\bibliographystyle{IEEEbib}
\bibliography{references}
\end{document}